\theoremstyle{plain}
\newtheorem{theorem}{Theorem}[section]
\newtheorem{corollary}[theorem]{Corollary}
\newtheorem{lemma}[theorem]{Lemma}
\newtheorem{proposition}[theorem]{Proposition}
\theoremstyle{definition}
\newtheorem{example}[theorem]{Example}
\newtheorem{remark}[theorem]{Remark}
\theoremstyle{remark}
\begin{document}
\title{Commutative rings with two-absorbing factorization}

\author{Muzammil Mukhtar, Malik Tusif Ahmed and Tiberiu Dumitrescu}

\address{Abdus Salam School of Mathematical Sciences GCU Lahore, Pakistan}
\email{muzammilmukhtar@sms.edu.pk, muzammilmukhtar3@gmail.com (Mukhtar)}
\email{tusif.ahmed@sms.edu.pk, tusif.ahmad92@gmail.com (Ahmed)}
\address{Facultatea de Matematica si Informatica,University of Bucharest,14 A\-ca\-de\-mi\-ei Str., Bucharest, RO 010014,Romania}
\email{tiberiu@fmi.unibuc.ro, tiberiu\_dumitrescu2003@yahoo.com (Dumitrescu)}

\begin{abstract}
We use the concept of $2$-absorbing ideal introduced by Badawi to
study those commutative rings in which every   proper ideal  is a product of $2$-absorbing ideals (we call them TAF-rings). Any TAF-ring has dimension at most one and the local TAF-domains are the atomic pseudo-valuation domains.
\end{abstract}

\thanks{2010 Mathematics Subject Classification: Primary 13A15, Secondary 13F15.}
\keywords{Two-absorbing ideal, Dedekind domain, pseudo-valuation domain}

\maketitle


\section{Introduction}

 In \cite{B}, Badawi introduced and studied the concept of $2$-absorbing ideal which is a generalization of prime ideal.  An ideal $I$ of a commutative ring $R$ is a {\em $2$-absorbing ideal} (our abbreviation {\em TA-ideal}) if whenever $a,b,c\in R$ and $abc\in I$, then $ab\in I$ or $ac\in I$ or $bc\in I$. 
 In this case, $\sqrt{I}=P$ is a prime ideal with $P^2\subseteq I$ or 
 $\sqrt{I}=P_1\cap P_2$ with $P_1$, $P_2$ incomparable prime ideals and $P_1P_2\subseteq I$, cf. \cite[Theorem 2.4]{B}. In a Pr\"ufer domain, a TA-ideal is  prime  or  a product of  two primes, cf. \cite[Theorem 3.14]{B}. 
 In \cite{AB}, Anderson and Badawi introduced and investigated the more general concept of  $n$-absorbing ideal (an ideal $I$ is {\em  $n$-absorbing} if whenever $I$ contains an $(n+1)$-factor product $P$, then $I$ contains an $n$-factor subproduct of $P$). The study of $n$-absorbing ideals continued in several other recent papers (see for instance \cite{PB}).
 
 The aim of this note  is to study the commutative rings whose ideals have a TA-factorization. 
 Let $I$ be an ideal of $R$. By a {\em TA-factorization} of $I$ we mean an expression of $I$ as a product $J_1\cdots J_n$ of proper TA-ideals. Call $R$ a {\em $2$-absorbing factorization ring (TAF-ring)} if every proper ideal has a TA-factorization. A {\em TAF-domain} is a domain which is a TAF-ring. Our paper consists of this introduction and another three sections.

 In Section 2 we present basic facts.   
 In a TAF-ring every ideal has finitely many minimal prime ideals (Proposition \ref{2P}) and  TAF-ring property is stable under factor ring (resp. fraction ring, resp. finite direct product ring) formation   (Propositions \ref{1P} and \ref{11P}). While $\mathbb{Z}[\sqrt{-7}]$ is an easy example of a non-TAF-domain, $\mathbb{Z}_8[X]/(X^2,2X)$ is a finite non-TAF-ring of smallest possible order (Proposition \ref{24P}).
 Since every prime ideal is a TA-ideal, any ZPI-ring (i.e. a ring whose proper ideals are products of prime ideals) is a TAF-ring; so a Dedekind domain is a TAF-domain. As recalled above, in a Pr\"ufer domain the TA-ideals are     products of  primes (\cite[Theorem 3.14]{B}), so the Pr\"ufer TAF-domains are the Dedekind domains. In Proposition \ref{14P}  we extend these facts to rings with zero-divisors by showing that in an arithmetical ring (i.e. a ring whose 
 ideals are locally comparable under inclusion) 
 the TA-ideals are products of  primes; so the arithmetical TAF-rings are the ZPI-rings. 
In Section 3 we show that  a TAF-ring is a finite direct product of one-dimensional domains and zero-dimensional local rings, so its  dimension is at most one (Theorems \ref{26T} and \ref{27T}).
As an application, the polynomial ring $R[X]$ is a TAF-ring if and only if    $R$ is a von Neumann regular TAF-ring if and only if    $R$ is a finite direct product of fields (Corollary \ref{19C}).
In Section 4 we study  TAF-domains.  In Theorem \ref{5T} we characterize the local  TAF-domains.   These are exactly the domains studied by Anderson and Mott in \cite[Theorem 5.1, Corollary 5.2]{AM}, that is, the (atomic) pseudo-valuation domains  (the definition of a pseudo-valuation domain (PVD) is recalled in Theorem \ref{17T}).
In Theorem \ref{3T} we show that a domain $D$ is a TAF-domain if and only if 
$D$ has finite character and it is locally a TAF-domain if and only if
$D$ is a one-dimensional ACCP-domain which has finite character and  every principal ideal generated by an atom  has a TA-factorization.
In particular, 
if $K\subseteq L$ is a field extension, then $K+XL[X]$ is a TAF-domain (Corollary \ref{22C}). While in general an overring of a TAF-domain is not TAF, this fact is true in the Noetherian case (Example \ref{25E} and Corollary \ref{20C}). In Corollary \ref{18C} we characterize the Noetherian domains with nonzero conductor as certain pull-backs of  Dedekind domains. 

Throughout this note all rings are commutative and unitary. For any undefined terminology, we refer the reader to \cite{G} or \cite{K}.

 \section{Basic facts}

We begin by examining a simple example. 

\begin{example}\label{15E}
$\mathbb{Z}[\sqrt{-7}]$ is not a TAF-domain, because the ideal  $I=(3+\sqrt{-7})$ has no TA-factorization. Indeed, $I$ is not a TA-ideal  because $2\cdot 2\cdot 4\in I$ but $8\notin I$. 
As $2^4\in I$, all proper ideals containing $I$ are contained in the maximal ideal  $(2,3+\sqrt{-7})$. So, if $I$ is a product of at least two proper (TA)-ideals, then $I\subseteq (2,3+\sqrt{-7})^{2}\subseteq (2)$, a contradiction.
\end{example}
Denote by   Min$(I)$   the set of minimal prime ideals over an ideal $I$.

\begin{proposition}\label{2P}
If $I$ is a proper ideal of a TAF-ring $R$, then Min$(I)$ is finite.
\end{proposition}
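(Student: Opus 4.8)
The plan is to reduce the finiteness of Min$(I)$ to the finiteness of the minimal-prime sets of the factors in a TA-factorization of $I$, using the structure theorem for TA-ideals. The starting point is the observation, recalled from \cite[Theorem 2.4]{B}, that a TA-ideal $J$ has at most two minimal primes: indeed $\sqrt{J}$ is either a prime $P$ (with $P^2\subseteq J$) or an intersection $P_1\cap P_2$ of two incomparable primes (with $P_1P_2\subseteq J$), and since $\mathrm{Min}(J)=\mathrm{Min}(\sqrt{J})$, this set is $\{P\}$ or $\{P_1,P_2\}$ respectively. In particular each TA-ideal has a finite set of minimal primes.

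First I would write $I=J_1\cdots J_n$ as a product of proper TA-ideals, which exists because $R$ is a TAF-ring. Setting $S=\mathrm{Min}(J_1)\cup\cdots\cup\mathrm{Min}(J_n)$, this is a finite set of prime ideals by the previous paragraph, and every member of $S$ contains $I$, since $I=J_1\cdots J_n\subseteq J_i\subseteq Q$ for any $Q\in\mathrm{Min}(J_i)$. The heart of the argument is the claim that $\mathrm{Min}(I)\subseteq S$, which I would prove as follows. Take $P\in\mathrm{Min}(I)$. Since $P\supseteq I=J_1\cdots J_n$ and $P$ is prime, $P$ contains one of the factors, say $P\supseteq J_i$. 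Hence $P$ contains some minimal prime $Q$ of $J_i$, and $Q$ in turn satisfies $Q\supseteq J_i\supseteq I$. Thus $Q$ is a prime containing $I$ with $Q\subseteq P$; the minimality of $P$ over $I$ forces $Q=P$, so $P=Q\in S$.

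Combining the two steps gives $\mathrm{Min}(I)\subseteq S$ with $S$ finite, which is the assertion. I do not expect a genuine obstacle here: the only point requiring care is the verification that the minimal prime $Q$ of the chosen factor $J_i$ is comparable to, and in fact minimal among, the primes over $I$, which is exactly where the chain $I\subseteq J_i\subseteq Q\subseteq P$ together with the minimality of $P$ is used to collapse $Q$ and $P$. The essential input is simply that TA-ideals have at most two minimal primes, so that the finiteness of $\mathrm{Min}(I)$ is inherited from the length $n$ of the factorization, with the crude bound $|\mathrm{Min}(I)|\le 2n$.
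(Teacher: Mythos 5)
Your proposal is correct and follows exactly the paper's argument: factor $I$ into TA-ideals, observe that $\mathrm{Min}(I)\subseteq\bigcup_j\mathrm{Min}(J_j)$, and use the fact (from Badawi's structure theorem) that each TA-ideal has finitely many minimal primes. You merely spell out the inclusion step that the paper leaves to the reader, and add the explicit bound $|\mathrm{Min}(I)|\le 2n$.
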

\begin{proof}
Let $I=I_1 I_2 \cdots I_n$ be a TA-factorization of $I$. Then Min$(I)\subseteq \cup _{j=1}^n Min(I_j)$ and each Min$(I_j)$ is finite by \cite[Theorem 2.3]{B}. 
\end{proof}

\begin{proposition}\label{1P}
A factor ring (resp. a fraction ring) of a TAF-ring is again a TAF-ring.
\end{proposition}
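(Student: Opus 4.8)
The plan is to treat the two assertions in parallel, since both rest on the same two ingredients: that a product factorization $I=I_1\cdots I_n$ survives the image (resp. localization) map, and that the TA-property is preserved under the relevant operation. Throughout I would use the standard facts that for a surjection $\pi\colon R\to \bar R$ one has $\pi(AB)=\pi(A)\pi(B)$, and that for a multiplicative set $S$ one has $S^{-1}(AB)=(S^{-1}A)(S^{-1}B)$.

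For the factor ring $R/J$, first I would note that every proper ideal of $R/J$ has the form $I/J$ with $J\subseteq I$ and $I$ a proper ideal of $R$. Taking a TA-factorization $I=I_1\cdots I_n$ in $R$, the crucial observation is that $J\subseteq I=I_1\cdots I_n\subseteq I_j$ for every $j$, so each factor $I_j$ already contains $J$. Hence each $I_j/J$ is a well-defined proper ideal of $R/J$. Next I would verify the elementary lifting fact: when $J\subseteq I_j$, the image $I_j/J$ is a TA-ideal of $R/J$ if and only if $I_j$ is a TA-ideal of $R$, because the $2$-absorbing condition reads identically before and after passing to the quotient, since $abc\in I_j \iff \overline{abc}\in I_j/J$ (and likewise for the two-factor subproducts). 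Finally, applying $\pi$ to $I=I_1\cdots I_n$ yields $I/J=(I_1/J)\cdots(I_n/J)$, a TA-factorization in $R/J$.

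For the fraction ring $S^{-1}R$, a proper ideal has the form $S^{-1}I$ where $I=\mathfrak a\cap R$ is proper and disjoint from $S$. Again I would take a TA-factorization $I=I_1\cdots I_n$ and localize to get $S^{-1}I=(S^{-1}I_1)\cdots(S^{-1}I_n)$. The step I would isolate as a lemma is that the localization of a TA-ideal is a TA-ideal: starting from $(a/s)(b/t)(c/u)\in S^{-1}I_j$, one clears denominators to obtain $s'abc\in I_j$ for some $s'\in S$, applies the $2$-absorbing property to the triple $(s'a,b,c)$, and reads the conclusion $\tfrac{a}{s}\tfrac{b}{t}\in S^{-1}I_j$ (or one of its analogues) back in $S^{-1}R$.

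The main obstacle, and the point where the two cases genuinely differ, is properness of the factors in the fraction-ring case. In the quotient case the containment $J\subseteq I_j$ forced every $I_j/J$ to be proper automatically; here, although $I\subseteq I_j$ and $I\cap S=\emptyset$, a factor $I_j$ may still meet $S$, in which case $S^{-1}I_j=S^{-1}R$ fails to be proper. The remedy is to discard exactly those factors: since $S^{-1}R$ is the identity of the ideal monoid, deleting the improper factors leaves the product still equal to $S^{-1}I=\mathfrak a$, and at least one proper factor must survive because $\mathfrak a$ itself is proper. The surviving factors then constitute a genuine TA-factorization of $\mathfrak a$, completing the argument.
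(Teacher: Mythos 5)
Your proof is correct and follows essentially the same route as the paper: factor the ideal in $R$ and push the factorization through the quotient (resp.\ localization) map, using that the TA-property is preserved under these operations. The only difference is that you verify the two auxiliary facts directly (the paper cites them from Anderson--Badawi and Badawi) and you explicitly handle the possibility that a localized factor becomes improper, which is a worthwhile point of care but not a change of method.
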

\begin{proof}
 Let $R$ be a TAF-ring and $I\subseteq J$  ideals of $R$. As $R$ is a TAF-ring, we can write $J=J_1 J_2 \cdots J_n$ with each $J_i$  a TA-ideal of $R$. Then $J/I=(J_1/I)(J_2/I) \cdots (J_n/I)$ where each $J_i/I$ is a TA-ideal of $R/I$, cf. \cite[Corollary 4.3]{AB}. So, $R/I$ is a TAF-ring. The fraction ring assertion is a consequence of the fact that a localization of a TA-factorization is still a TA-factorization, cf.  \cite[Lemma 3.13]{B}.
\end{proof}

\begin{proposition}\label{11P}
Two rings $A$ and $B$ are TAF-rings if and only if their direct product $A\times B$ is a TAF-ring 
\end{proposition}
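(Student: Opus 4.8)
The forward implication is essentially free: since $A \cong (A \times B)/(\{0\} \times B)$ and $B \cong (A \times B)/(A \times \{0\})$ are factor rings of $A \times B$, Proposition \ref{1P} immediately gives that $A$ and $B$ are TAF-rings whenever $A \times B$ is one. So the real content lies in the converse, and the plan there is to lift TA-factorizations from the two factors up to the product.

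For the converse I would begin by recalling the standard structural fact that every ideal $K$ of $A \times B$ has the form $K = I \times J$ for an ideal $I$ of $A$ and an ideal $J$ of $B$ (via the idempotent $(1,0)$, setting $I = \{a : (a,0) \in K\}$ and $J = \{b : (0,b) \in K\}$); here $K$ is proper exactly when $I \neq A$ or $J \neq B$. The computational heart is the pair of identities $I \times J = (I \times B)(A \times J)$ and, for a factorization $I = I_1 \cdots I_m$ in $A$, the distribution $I \times B = (I_1 \times B) \cdots (I_m \times B)$, together with the analogue for $A \times J$. Both are verified by the same elementary computation with the product-of-ideals formula, using that the components $(x,0)$ and $(0,y)$ can be produced independently inside $A \times B$.

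Granting these, if $A$ and $B$ are TAF-rings and $K = I \times J$ is a proper ideal, I would factor $I = I_1 \cdots I_m$ and $J = J_1 \cdots J_n$ into proper TA-ideals in $A$ and $B$ respectively (simply omitting one block in the two boundary cases $I = A$ or $J = B$, which cannot occur simultaneously since $K$ is proper), and then assemble
\[
K = (I_1 \times B) \cdots (I_m \times B)\,(A \times J_1) \cdots (A \times J_n).
\]
Each factor appearing here is proper, since $I_k \neq A$ and $J_l \neq B$.

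The one genuine point to check, and the step I expect to be the main obstacle, is that each inflated ideal $I_k \times B$ (and likewise each $A \times J_l$) is again a TA-ideal of $A \times B$. This is a direct verification: if $(a_1,b_1)(a_2,b_2)(a_3,b_3) \in I_k \times B$, then $a_1 a_2 a_3 \in I_k$, so by the TA-property of $I_k$ in $A$ some two-factor subproduct, say $a_1 a_2$, lies in $I_k$, whence $(a_1,b_1)(a_2,b_2) = (a_1 a_2, b_1 b_2) \in I_k \times B$. The second coordinate imposes no constraint precisely because it ranges over all of $B$, which is exactly why inflation by the full component preserves the TA-property. With this lemma in hand, the displayed factorization is a genuine TA-factorization, completing the converse.
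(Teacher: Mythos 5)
Your proof is correct and follows essentially the same route as the paper: decompose an ideal of $A\times B$ as $I\times J$, factor each component, and inflate via $I\times J=(I_1\times B)\cdots(I_m\times B)(A\times J_1)\cdots(A\times J_n)$. The only cosmetic difference is that you verify the TA-property of $I_k\times B$ by a direct element check, whereas the paper deduces it from the isomorphism $(A\times B)/(H\times B)\cong A/H$; both are fine.
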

\begin{proof}
$(\Rightarrow)$ 
An ideal $H$ of $A$ is a TA-ideal if and only if $H\times B$ is a TA-ideal of $A\times B$, because $(A\times B)/(H\times B)=A/H$.
Therefore, if $I=I_{1}I_{2}\cdots I_{m}$  is (resp. $J=J_{1}J_{2}\cdots J_{n}$) are TA-factorizations in $A$ (resp. $B$), then $I\times J =(I_{1}\times B)\cdots (I_{m}\times B)(A\times J_{1})\cdots (A\times J_{n})$ is a TA-factorization in 
$A\times B$.
$(\Leftarrow)$ follows from Proposition \ref{1P}.
\end{proof}


\begin{proposition}\label{20P}
 Let $R$ be a TAF-ring and $I$ an  ideal of $R$ with $\sqrt{I}=M\in $Max$(R)$.
 Then $I$ and $M^2$ are comparable under inclusion. 
\end{proposition}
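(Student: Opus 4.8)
The plan is to begin from a factorization and then pin down the radical of each factor. Since $R$ is a TAF-ring, I would write $I=I_1I_2\cdots I_n$ as a product of proper TA-ideals. Taking radicals and using $\sqrt{I_1\cdots I_n}=\sqrt{I_1}\cap\cdots\cap\sqrt{I_n}$, the hypothesis $\sqrt{I}=M$ gives $\bigcap_{j=1}^{n}\sqrt{I_j}=M$. Because an intersection is contained in each of its members, this yields $M\subseteq\sqrt{I_j}$ for every $j$; and since each $I_j$ is proper, $\sqrt{I_j}\neq R$. Maximality of $M$ then forces $\sqrt{I_j}=M$ for all $j$.

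Next I would feed this into the structure theorem for TA-ideals. Since each $\sqrt{I_j}=M$ is prime, Badawi's dichotomy \cite[Theorem 2.4]{B} must land in its first alternative (the second requires the radical to be an intersection of two \emph{incomparable} primes), giving $M^2\subseteq I_j$. Combined with the inclusion $I_j\subseteq\sqrt{I_j}=M$, this produces the two-sided bound $M^2\subseteq I_j\subseteq M$ for every factor $j$.

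The conclusion then follows from a short case analysis on $n$. If $n=1$, then $I=I_1\supseteq M^2$, so $M^2\subseteq I$. If $n\geq 2$, then $I=I_1I_2\cdots I_n\subseteq I_1I_2\subseteq M\cdot M=M^2$, so $I\subseteq M^2$. In either case $I$ and $M^2$ are comparable under inclusion, as required.

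I do not anticipate a serious obstacle: the real content is the observation that a maximal radical forces \emph{all} the factors to have radical exactly $M$, after which \cite[Theorem 2.4]{B} supplies the key inclusion $M^2\subseteq I_j$. The only step deserving a moment's care is the radical computation, namely deducing $\sqrt{I_j}=M$ for each individual $j$ from the fact that the \emph{intersection} of the $\sqrt{I_j}$ equals the maximal ideal $M$, which is exactly where properness of the factors and maximality of $M$ are both used.
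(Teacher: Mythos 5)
Your proof is correct and is essentially the paper's argument: both rest on the observations that every factor $I_j$ has radical exactly $M$ (hence $M^2\subseteq I_j\subseteq M$ by \cite[Theorem 2.4]{B}) and that a product of $n\geq 2$ such factors lands in $M^2$. The only difference is organizational — you split directly on $n$, whereas the paper assumes $M^2\not\subseteq I$ and uses \cite[Theorem 2.4]{B} to force $n\geq 2$ — so the two proofs are interchangeable.
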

\begin{proof}
 Assume that $M^2\not\subseteq I$. Then  $I$ is not a TA-ideal, by \cite[Theorem 2.4]{B}. As $R$ is a TAF-ring, we have 
 $I=I_{1} I_{2}\cdots I_{n}$ with each $I_{i}$ a proper TA-ideal and $n\geq 2$. From $I\subseteq I_{i}$, we get $\sqrt{I_{i}}=M$ for each $i$, so $I\subseteq M^2$.  
\end{proof}


\begin{proposition}\label{24P}
 Let $n$ be a positive integer such that $p^4\not\!| \ n$ for each prime $p$. Then every ring of order $n$ is a TAF-ring. Moreover, for each prime $p$,   $\mathbb{Z}_{p^3}[X]/(X^2,pX)$ is a non-TAF ring of order $p^4$.
\end{proposition}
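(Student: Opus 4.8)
The statement has two parts, and the plan is to treat them separately.

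For the first part, the key structural tool is the decomposition of a finite commutative ring as a finite direct product of local rings of prime-power order. By Proposition \ref{11P}, a finite direct product is a TAF-ring iff each factor is, so it suffices to show that every local ring of order $p^k$ with $k\le 3$ is a TAF-ring. Let me think about the possible orders...

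So the factors have order $p$, $p^2$, or $p^3$. Order $p$ means a field, trivially TAF. The plan would be to argue that any local ring of order $p^2$ or $p^3$ is either a field, or has a principal maximal ideal, or the maximal ideal squares to zero — in all of which cases one can exhibit TA-factorizations of every proper ideal directly. The main point is that the maximal ideal $M$ satisfies $M^k=0$ for small $k$, forcing every proper ideal $I$ to satisfy $M^2\subseteq I$ or to be itself a power of $M$; combined with Proposition \ref{20P} this should pin things down.

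For the second part I would exhibit a specific ideal in $R=\mathbb Z_{p^3}[X]/(X^2,pX)$ with no TA-factorization, mirroring Example \ref{15E}.

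Let me develop this properly.
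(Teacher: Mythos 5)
Your first half follows the paper's route: reduce via Proposition \ref{11P} to local rings of order $p$, $p^2$ or $p^3$. Your proposed trichotomy (field, principal maximal ideal, or $M^2=0$) does hold and does suffice: counting $|M/M^2|$ as a vector space over $R/M$ shows that a local ring of order $p^3$ whose maximal ideal is nonzero and non-principal must have $M^2=0$; in the $M^2=0$ case every proper ideal is a TA-ideal, and in the principal case every proper ideal is a power of the prime $M$. The paper is slightly more direct: for order $p^2$ the maximal ideal always squares to zero, so all ideals are TA; for order $p^3$ any nonzero proper ideal $I$ has $|R/I|\le p^2$, hence $M^2\subseteq I$ and $I$ is TA, while $(0)=M^3$ is itself a TA-factorization. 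One caution: Proposition \ref{20P} is a \emph{necessary} condition for the TAF property, so it cannot ``pin down'' the positive direction; you do not need it there and should not lean on it.

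The genuine gap is the ``moreover'' part, which you leave entirely as a placeholder: the whole content of that half is the identification of the obstruction in $A=\mathbb{Z}_{p^3}[X]/(X^2,pX)$, and your plan names neither the ideal nor the argument. The paper takes $x$ to be the image of $X$ and observes that $xA$ and $(p,x)^2=p^2A$ are incomparable (as abelian groups $A=\mathbb{Z}_{p^3}\oplus\mathbb{F}_p x$, so $p^2\notin xA$ and $x\notin p^2A$), whence Proposition \ref{20P} --- used here in its correct, negative direction --- shows $A$ is not a TAF-ring. Your Example \ref{15E}-style alternative would also work with $I=xA$: since $\sqrt{xA}=(p,x)=M$ is the unique prime and $M^2\not\subseteq xA$, the ideal $xA$ is not TA, and any factorization into at least two proper ideals would force $xA\subseteq M^2=p^2A$, which fails. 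But until one of these arguments is actually written out, the second half of the proposition remains unproved.
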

\begin{proof}
 Combining Proposition \ref{11P} with the well-known fact that each finite ring  is a product of local rings of prime power order, it suffices to consider local rings of order $n=p^2$ or $n=p^3$. In the first case the possible rings  $\mathbb{Z}_{p^2}$, $\mathbb{Z}_{p}[X]/(X^2)$ and $\mathbb{F}_{p^2}$ are TAF-rings because their ideals are TA-ideals (since the square of the maximal ideal is zero).
 Assume that $(R,M)$ is a local ring of order $p^3$. Then $M^3=(0)$ and, by the preceding case, all nonzero ideals of $R$ are TA-ideals.  For the ``moreover'' part, denote by $x$ the image of $X$ in $A=\mathbb{Z}_{p^3}[X]/(X^2,pX)$. Then $xA$ and $(p,x)^2$ are not comparable  under inclusion, so $A$ is not a TAF-ring by Proposition \ref{20P}.
\end{proof}

\begin{remark}
 In particular, $T=\mathbb{Z}_8[X]/(X^2,2X)$ is a finite non-TAF-ring of smallest possible order $16$. Let $d$ be a non-square integer  congruent to $1$ mod $8$. Since  $(1+X)^2-d$ has null image in $T$, it follows that $T$ is an epimorphic image of $\mathbb{Z}[\sqrt{d}]$, so the later  is not a TAF-ring, cf. Proposition \ref{1P} (see also Corollary \ref{6C}). A similar argument works for $D=\mathbb{Z}[\sqrt[3]{d}]$, where $d$ is a noncube integer $d\equiv 1\ ($mod $9)$. Indeed, $D/9D$ is isomorphic to $R=\mathbb{Z}_9[X]/(X^3)$ which is a local ring with maximal ideal $M=(3,X)R$. An easy computer check shows that $(X^2+3)R$ and $M^2$ are not comparable under inclusion, so $R$ (and hence $D$) are not TAF-rings, by Proposition \ref{20P}.
\end{remark}

We extend to rings with zero-divisors, the description of TA-ideals of a Pr\"ufer domain given in \cite[Theorem 3.14]{B}. An \emph{arithmetical ring} is a ring whose   localizations at  maximal ideals are   chained rings.
 A ring $R$ is called a \emph{chained ring} if every two ideals of $R$ are comparable under inclusion.  
Obviously, the arithmetical (resp. chained) domains are the Pr\"ufer (resp. valuation) domains.
 
\begin{lemma} \label{10L}
Let $R$ be a chained ring and $I$  a TA-ideal of  $R$ such that $\sqrt{I}=P$ is a prime ideal. Then $I\in \{P,P^2\}$ and $I$ is $P$-primary.
\end{lemma}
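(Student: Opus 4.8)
The plan is to first fix the inclusions that come for free and then push the chained structure through the $2$-absorbing condition. Since $\sqrt{I}=P$ is prime, the description recalled in the introduction (\cite[Theorem 2.4]{B}) gives $P^2\subseteq I$, while $I\subseteq\sqrt{I}=P$ is automatic; so I only need to rule out an $I$ strictly between $P^2$ and $P$, and to check primariness. I would record two elementary consequences of $R$ being chained: (i) for any $x,y\in R$ the principal ideals $(x),(y)$ are comparable, hence $x$ divides $y$ or $y$ divides $x$; and (ii) if $r\notin P$ then $P\subseteq(r)$, because for $p\in P$ comparability of $(p),(r)$ together with the implication $(r)\subseteq(p)\Rightarrow r\in P$ forces $(p)\subseteq(r)$.

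The engine of the whole argument is a single use of the TA-property: if $a^2d\in I$, then applying the defining condition to the triple $(a,a,d)$ gives $a^2\in I$ or $ad\in I$; and $a^2\in I$ would put $a\in\sqrt{I}=P$. Hence, \emph{whenever $a\notin P$ and $a^2d\in I$, one has $ad\in I$}. Both conclusions of the lemma will follow by feeding suitable triples into this observation.

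For primariness I would take $ab\in I$ with $a\notin P$ and show $b\in I$. Since $ab\in I\subseteq P$ and $P$ is prime, $b\in P$, so by (ii) $b\in P\subseteq(a)$ and $b=ad$ for some $d$; then $ab=a^2d\in I$ and the engine yields $b=ad\in I$. This shows $I$ is primary, hence $P$-primary. For the dichotomy I would assume $I\neq P$, fix $b\in P\setminus I$, and prove $I\subseteq P^2$. Given $a\in I$, comparability (i) forces $(a)\subseteq(b)$ — the alternative $b\in(a)\subseteq I$ is excluded — so $a=rb$; it then suffices to see $r\in P$, since $b\in P$ gives $a=rb\in P^2$. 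If instead $r\notin P$, then by (ii) $b=rc$ with $c\in P$ (as $b\in P$ and $r\notin P$), whence $a=r^2c\in I$ and the engine gives $b=rc\in I$, contradicting $b\notin I$. Thus $I\subseteq P^2$, and with $P^2\subseteq I$ we obtain $I=P^2$.

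I expect the delicate point to be the dichotomy rather than the primariness: the bookkeeping of reducing each comparison to a divisibility $a=rb$ and then pinning the cofactor $r$ inside $P$ is where the chained hypothesis is really used, whereas everything else is a mechanical application of the $2$-absorbing condition to well-chosen triples.
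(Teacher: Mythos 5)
Your proof is correct, and its engine --- using comparability of principal ideals in the chained ring to produce a factorization and then feeding a triple of the form $(r,r,c)$ with $r\notin P$ into the $2$-absorbing condition to force $rc\in I$ --- is exactly the mechanism of the paper's proof, which picks $x\in P\setminus I$, writes $I=xJ$, and derives the same contradiction from $y\cdot y\cdot z\in I$ with $y\notin P$. The only real differences are cosmetic: you argue element-by-element ($a=rb$) where the paper argues with the ideal $J$, and you prove $P$-primariness directly from the chained structure where the paper simply cites \cite[Theorem 3.6]{B}; both versions are sound.
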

\begin{proof}
By \cite[Theorem 3.6]{B}, $I$ is $P$-primary and $P^2\subseteq I$. Suppose there exists $x\in P-I$. As $R$ is a chained ring, we have $I=xJ$ for some ideal $J$ of $R$. If $J\subseteq P$, then we get $I=P^2$. Assume there exits  $y\in J-P$. As $R$ is a chained ring, we get $P\subseteq yR$, so $x=yz$ for some $z\in R$. Note that $xy=y^2z \in I$ but $y^2 \notin I$. As $I$ is a TA-ideal, we get $yz=x\in I$, a contradiction.   
\end{proof}

\begin{proposition}\label{14P}
Let $I$ be a TA-ideal of an arithmetical ring $R$. Then $I$ is a prime ideal or a product of two prime ideals. In particular, the  arithmetical  TAF-rings are exactly the ZPI rings.
\end{proposition}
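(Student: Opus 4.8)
The plan is to use the dichotomy in \cite[Theorem 2.4]{B}: either $\sqrt I=P$ is prime with $P^2\subseteq I\subseteq P$, or $\sqrt I=P_1\cap P_2$ with $P_1,P_2$ incomparable primes and $P_1P_2\subseteq I\subseteq P_1\cap P_2$. In each case I want to pin $I$ down to $P$, $P^2$, or $P_1P_2$ by working locally. The two facts I would rely on throughout are that localizing an arithmetical ring at \emph{any} prime $Q$ yields a chained ring (since $R_Q=(R_M)_{QR_M}$ for a maximal $M\supseteq Q$, and a localization of a chained ring is chained), and that localization preserves both TA-ideals \cite[Lemma 3.13]{B} and radicals, so that $IR_Q$ is a TA-ideal of the chained ring $R_Q$ with $\sqrt{IR_Q}=(\sqrt I)R_Q$. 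Combined with the local-global principle that an ideal equals the set of elements lying in all of its localizations, this lets me transport Lemma \ref{10L} from chained rings to $R$.

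For the first case I would localize at $P$ itself. Since $R_P$ is chained and $IR_P$ is a TA-ideal with prime radical $PR_P$, Lemma \ref{10L} gives $IR_P\in\{PR_P,P^2R_P\}$. The heart of the argument is then the contraction identity $I=IR_P\cap R$ (i.e.\ that $I$ is $P$-primary). To prove the nontrivial inclusion, suppose $sy\in I$ with $s\notin P$; I would check $y/1\in IR_M$ for every maximal $M$. This is trivial when $M\not\supseteq P$ (then $IR_M=R_M$); when $M\supseteq P$ the key observation is that $s\notin P$ forces $s/1\notin PR_M$, because $us\in P$ with $u\notin M$ and $P$ prime would put $u\in P\subseteq M$, a contradiction. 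Hence $s/1$ lies outside the radical $PR_M=\sqrt{IR_M}$ of the $PR_M$-primary ideal $IR_M$ (primariness coming from Lemma \ref{10L}), and $sy\in IR_M$ yields $y/1\in IR_M$. The same computation, using that $P^2R_M$ is $PR_M$-primary in the chained ring $R_M$, shows $P^2=P^2R_P\cap R$, so reading off $IR_P$ gives $I=PR_P\cap R=P$ or $I=P^2R_P\cap R=P^2$. I expect this contraction step to be the main obstacle, since it is exactly the point that rules out ``mixed'' local behaviour; localizing at $P$ rather than at the various maximal ideals is what makes it go through.

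For the second case I would instead localize at \emph{every} prime $Q$ and show $IR_Q=P_1P_2\,R_Q$ in all cases, whence $I=P_1P_2$ by the local-global principle together with $P_1P_2\subseteq I$. If $Q\not\supseteq P_1\cap P_2$ both sides are the unit ideal. If $Q$ contains exactly one $P_i$, the other localizes to $R_Q$ and the squeeze $P_1P_2\subseteq I\subseteq P_1\cap P_2$ collapses to $IR_Q=P_iR_Q=P_1P_2\,R_Q$. If $Q\supseteq P_1,P_2$, then $P_1R_Q,P_2R_Q$ are comparable in the chained ring $R_Q$, and in fact distinct (each $P_i=P_iR_Q\cap R$), say $P_1R_Q\subsetneq P_2R_Q$; a short argument valid in any chained ring shows $\mathfrak p\mathfrak q=\mathfrak p$ for strictly comparable primes $\mathfrak p\subsetneq\mathfrak q$ (given $x\in\mathfrak p$, pick $y\in\mathfrak q\setminus\mathfrak p$ and use that $xR_Q,yR_Q$ are comparable together with primeness of $\mathfrak p$). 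Thus $P_1P_2\,R_Q=P_1R_Q=(P_1\cap P_2)R_Q$, and the squeeze $P_1P_2\,R_Q\subseteq IR_Q\subseteq (P_1\cap P_2)R_Q$ again forces $IR_Q=P_1P_2\,R_Q$. Collecting the cases yields $I=P_1P_2$.

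Finally, for the ``in particular'' statement: if $R$ is arithmetical and TAF, then every proper ideal is a product of TA-ideals, each of which is a prime or a product of two primes by the first part, so every proper ideal is a product of primes and $R$ is a ZPI-ring. Conversely a ZPI-ring is a finite direct product of Dedekind domains and special primary rings, which are Pr\"ufer or chained and hence arithmetical; and it is a TAF-ring because prime ideals are TA-ideals, as noted in the introduction. This direction is routine once the first part is in hand.
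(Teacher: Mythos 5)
Your overall strategy (the dichotomy of \cite[Theorem 2.4]{B} plus Lemma \ref{10L} applied to the chained localizations) is essentially the paper's, and your Case 2 and the ``in particular'' part are fine, though your third sub-case there is vacuous: if a prime $Q$ contained both $P_1$ and $P_2$, then $P_i=P_iR_Q\cap R$ together with the comparability of $P_1R_Q$ and $P_2R_Q$ would make $P_1,P_2$ comparable in $R$; the paper simply observes that $P_1$ and $P_2$ must be comaximal. The genuine gap is in Case 1, at the sentence ``using that $P^2R_M$ is $PR_M$-primary in the chained ring $R_M$.'' This is not a fact about chained rings: the square of a prime ideal of a chained ring with zero divisors need not be primary (nor a TA-ideal). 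For example, let $V$ be a valuation domain with value group $\mathbb{Z}\times\mathbb{Z}$ ordered lexicographically, let $Q$ be its non-maximal nonzero prime, and pick $t,y\in V$ with $v(t)=(1,0)$ and $v(y)=(0,1)$. In the chained ring $T=V/tyV$ one has $\bar Q^2=(0)$ (since $Q^2\subseteq tyV$), while $\bar t\,\bar y=0$ with $\bar y\notin\bar Q$ and $\bar t\neq 0$; so $\bar Q^2$ is not $\bar Q$-primary. Consequently $P^2R_P\cap R=P^2$ does not follow from what you have proved, and your conclusion ``$I=P^2R_P\cap R=P^2$'' in the subcase $IR_P=P^2R_P$ is unjustified. (For Pr\"ufer domains your claim is true, but the whole point of Proposition \ref{14P} is the extension to rings with zero divisors, and this is exactly where it bites.)

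The step can be repaired, and the paper's proof shows how: do not route through $R_P$, but look at all maximal ideals $M\supseteq P$. Lemma \ref{10L} gives $IR_M\in\{PR_M,P^2R_M\}$ and that $IR_M$ is $PR_M$-primary; the latter yields, exactly as in your contraction argument, that $I$ itself is $P$-primary. If $IR_M=PR_M$ for some maximal $M\supseteq P$, then $I=IR_M\cap R=P$, using primariness of $I$ (not of $P^2$). Otherwise $IR_M=P^2R_M$ for \emph{every} maximal ideal $M$ (the case $M\not\supseteq P$ being trivial), and then $I=P^2$ follows from the identity $J=\bigcap_M(JR_M\cap R)$ applied to $I$ and to $P^2$ simultaneously, with no primariness of $P^2$ needed. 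Alternatively, you could salvage your own route by noting that if $IR_P=P^2R_P\neq PR_P$, then Lemma \ref{10L} forces $IR_M=P^2R_M$ for all $M\supseteq P$, so $P^2R_M$ \emph{is} $PR_M$-primary there because it coincides with the TA-ideal $IR_M$; but that justification has to be supplied, and as written your claim is a false general statement about chained rings.
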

\begin{proof}
By \cite[Theorem 2.4]{B}, we have one of the cases: $(1)$ $P^2\subseteq I\subseteq P$ with $P$ a prime ideal or $(2)$ $P_1P_2\subseteq I\subseteq P_{1}\cap P_{2}$ with $P_1$, $P_2$ incomparable prime ideals. 
Suppose we are in Case $1$. By Proposition \ref{1P}, $IR_M$ is a TA-ideal hence $PR_M$-primary for each maximal ideal $M$ containing $P$, cf. Lemma \ref{10L}. Thus $I$ is primary because $\sqrt{I}=P$. If there is a maximal ideal $M$ containing $P$ such that $IR_{M}=PR_{M}$, then contracting back to $R$ we  get $I=P$. Otherwise, Lemma \ref{10L} shows that $IR_{M}=P^2R_{M}$ for each maximal ideal $M$ containing $P$, so $I=P^2$.
Suppose we are in Case $2$. Then $P_{1}$ and $P_{2}$ are comaximal because $R$ is a locally  chained ring. Thus $I=P_{1}P_{2}$. 
For the ``in particular'' assertion, use the well-known fact that a ZPI ring is arithmetical.
\end{proof}

\section{TAF-rings are at most one-dimensional}

\begin{theorem}\label{26T}
Let   $R$ be a TAF-ring. Then $R_M$ is a zero-dimensional ring or a one-dimensional domain for each $M\in$ Max$(R)$. In particular, $dim(R)\leq 1$.
\end{theorem}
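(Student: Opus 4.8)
The plan is to reduce to the local case and then use Proposition \ref{20P} as the main engine. Since a fraction ring of a TAF-ring is again a TAF-ring (Proposition \ref{1P}), $R_M$ is a local TAF-ring, so it suffices to prove: if $(R,M)$ is a local TAF-ring, then $R$ is zero-dimensional or a one-dimensional domain; the ``in particular'' statement then follows since $\dim R=\sup_{M}\dim R_M$. If every element of $M$ is nilpotent, then $M$ is the unique prime and $R$ is zero-dimensional, and we are done; so I would assume $M$ contains a non-nilpotent element, equivalently $\dim R\ge 1$, and aim to show that $R$ is then a one-dimensional domain.

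First I would bound the dimension by reducing to the domain case. Each minimal prime $P$ of $R$ (there are finitely many, by Proposition \ref{2P}) gives a quotient $R/P$ that is again a local TAF-ring (Proposition \ref{1P}), now a domain. Hence it is enough to prove that a local TAF-\emph{domain} has Krull dimension at most one: this yields $\mathrm{ht}(M/P)\le 1$ for every minimal $P$, whence $\dim R=\max_P\dim(R/P)\le 1$. To establish the domain bound itself I would argue by contradiction from a chain $0\subsetneq P\subsetneq M$ of primes in a local TAF-domain $(D,M)$, so that $\mathrm{ht}(M)\ge 2$; localizing at a suitable prime (still TAF) to arrange that radicals reach $M$, I would manufacture an $M$-primary ideal $I$ that is incomparable with $M^2$, contradicting Proposition \ref{20P}. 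The guiding model is the ``node'' phenomenon in $k[[x,y]]/(xy)$, where $(x^3,y)$ is $M$-primary yet incomparable with $M^2=(x^2,y^2)$: an ideal that reaches \emph{deep} along one prime direction while remaining \emph{shallow} transversally, so that $I\not\subseteq M^2$ and $M^2\not\subseteq I$ at the same time.

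Next, still under $\dim R\ge 1$ and now knowing $\dim R\le 1$, I would show $R$ is a domain by ruling out the two ways this can fail, again with the incomparability device (which is easier here because, $M$ not being a minimal prime, prime avoidance produces $s\in M$ with $\sqrt{(s)}=M$). If $R$ had a nonzero nilpotent $t$, which we may take with $t^2=0$, then the ideal generated by a high power of such an $s$ together with $t$ is $M$-primary but escapes comparison with $M^2$. If $R$ were reduced but had two distinct minimal primes $P_1,P_2$, then an ideal that is a high power along $P_1$ but only first order along $P_2$ (the exact analogue of $(x^3,y)$) is $M$-primary and incomparable with $M^2$. In either case Proposition \ref{20P} is violated, so $R$ has a unique minimal prime, necessarily $(0)$, and $R$ is a one-dimensional domain.

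The main obstacle is precisely the construction of these incomparable $M$-primary ideals without any Noetherian hypothesis: I must simultaneously guarantee that the radical of $I$ is exactly $M$ and that $I$ contains an element outside $M^2$ while omitting some element of $M^2$, with no recourse to finite generation. The way around this is to localize aggressively, since every localization is again TAF and TA-factorizations localize (Proposition \ref{1P} and \cite[Lemma 3.13]{B}), so that both the $M$-primariness and the crucial omission can be checked one prime at a time. Once the local dichotomy is in place, the bound $\dim R\le 1$ is immediate.
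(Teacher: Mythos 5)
Your reduction to the local case, and the further reduction of the dimension bound to local TAF-domains via $R/P$ for the finitely many minimal primes $P$, are both fine; but the engine you propose for every remaining step --- manufacturing an $M$-primary ideal incomparable with $M^2$ and citing Proposition \ref{20P} --- cannot carry the argument, and none of the constructions is actually carried out. The decisive obstruction for the dimension bound is a two-dimensional valuation domain $V$: there \emph{every} pair of ideals is comparable, so Proposition \ref{20P} holds vacuously and no incomparable $M$-primary ideal can be produced, no matter how aggressively you localize; yet such a $V$ is not a TAF-domain and is precisely the kind of ring the height-$\geq 2$ case must exclude. Comparability with $M^2$ is strictly weaker than the TAF property, so some genuinely different input is needed there.

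The same defect affects the one-dimensional non-domain case. Your ``analogue of $(x^3,y)$'' requires an element of the second minimal prime $P_2$ lying outside $M^2$, and your nilpotent construction requires $t\notin M^2$ (and, separately, an element of $M^2$ outside $(s^n,t)$, which without Noetherian hypotheses is a Krull-intersection-type claim). None of these is guaranteed; worse, the paper's own proof establishes (its claim $(2)$) that in a local TAF-ring every non-maximal prime is contained in $M^2$, so the element you need from $P_2$ provably does \emph{not} exist under the very hypothesis you are arguing from. The paper's actual argument is different in kind: assuming a nonzero prime $P\subsetneq M$, it proves $M\neq M^2$ and $Q\subseteq M^2$ for every prime $Q\subsetneq M$, picks $\pi\in M-M^2$ (whence $M^2=\pi M$ by Proposition \ref{20P} --- the only place comparability is used), and then applies the $2$-absorbing condition \emph{directly} to a TA-factor $H$ of $sR$, $s\in P-\{0\}$: writing $H=\pi^2J_1$ and using $\pi\cdot\pi\cdot j_1\in H$ with $\pi^2\notin H$ forces $H=\pi H$, hence $sR=\pi sR$ and $s=0$, a contradiction. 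That use of the defining property of a $2$-absorbing ideal inside a factor of a principal ideal is the missing idea; Proposition \ref{20P} alone does not suffice.
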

\begin{proof}
By Proposition \ref{1P},    we may assume that $R$ is local with maximal ideal  $M$ (and $M\neq (0)$).  It suffices to show that $M$ is the only nonzero prime ideal. Deny, so suppose that $R$ has a nonzero prime ideal  $P\neq M$. Pick an element $y\in M-P$. Shrinking $M$, we may assume that $M$ is minimal over $(P,y)$. Suppose for the moment that the following two assertions hold.

$(1)$ $M\neq M^2$.

$(2)$ Every  prime ideal $Q\subset M$ is contained in $M^2$.
\\
 By $(1)$ we can pick an element $\pi\in M-M^2$. By $(2)$ we have $\sqrt{\pi R}=M$, so  $M^2\subseteq \pi R$, hence $M^2=\pi M$, cf. Proposition \ref{20P}.
 Let $s\in P-\{0\}$ and $sR=H_1\cdots H_m$ a TA-factorization of $sR$.  As $P$ is prime, it contains some  $H_i:=H$. By $(2)$ we have $H\subseteq P\subseteq M^2=\pi M$, so,   $H=\pi J$ for some ideal $J$. From $H=\pi J\subseteq P$ and $\pi\notin P$ (because $\sqrt{\pi R}=M$), we get $J\subseteq P\subseteq M^2= \pi M$, so $J=\pi J_1$ for some ideal $J_1$, hence $H=\pi^2 J_1$. As $H$ is a TA-ideal and $\pi^2\notin H$ (because $\pi\notin P$), we get $J=\pi J_1\subseteq H$, thus $J=H$, so $H=\pi H$.
  Combining the equalities $sR=H_1\cdots H_m$ and $H=\pi H$, we get $sR=\pi sR$, so $s(1-\pi t)=0$ for some $t\in R$. Since $1-\pi t$ is a unit, we have $s=0$ which is  a contradiction. It remains to prove $(1)$ and $(2)$.

 {\em Proof of $(1)$.} Suppose that $M=M^2$. Then $M$ is the only TA-ideal with radical $M$.  As $R$ is a TAF-ring, $y\in M-P$ and $M$ is minimal over $(P,y)$,  it follows that  $(P,y^2)=M$.  We get $(P,y)=M=(P,y^2)$ which leads to a contradiction after moding out by $P$.

 {\em Proof of $(2)$.} Deny, so there exists a  prime ideal $Q\subset M$ such that $Q\not\subseteq M^2$. Pick an element  $z\in M-Q$. As $R$ is a TAF-ring
 and $Q\not\subseteq M^2$,
 it follows that $(Q,z^3)$ is a TA-ideal, hence from $z^3\in (Q,z^3)$ we get $z^2\in (Q,z^3)$ which gives a contradiction after moding out by $Q$. 
\end{proof}
 
\begin{remark}
 The proof above gives another argument for the classical fact that in a domain $D$ whose ideals are products of primes, every nonzero ideal is invertible. Indeed, such a domain is TAF, so $D$ has dimension one by Theorem \ref{26T} (the field case is trivial). It suffices to see that every maximal ideal $M$ is invertible. Given $x\in M-\{0\}$ and $xD=N_1\cdots N_n$ a prime factorization of $xD$, we get that $M$ contains some $N_i$, hence $M=N_i$, thus $M$ is invertible. 
\end{remark}

\begin{theorem}\label{27T}
Any TAF-ring is a finite direct product of one-dimensional domains and zero-dimensional local rings having nilpotent maximal ideal. In particular, a TAF-ring of dimension one having a unique height-zero prime ideal is a domain.
\end{theorem}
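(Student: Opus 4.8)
The plan is to locate the minimal primes of $R$, show that they are pairwise comaximal, split $R$ accordingly into a product of rings each with a single minimal prime, and then classify those pieces using the local structure theorem from Theorem \ref{26T}. Assume $R\neq 0$ and apply Proposition \ref{2P} to the proper ideal $(0)$ to obtain that $\mathrm{Min}(0)=\{P_1,\dots,P_k\}$ is finite; these are exactly the height-zero primes. The first key step is that the $P_i$ are pairwise comaximal. Indeed, if two distinct minimal primes $P_i,P_j$ were contained in a common maximal ideal $M$, then $P_iR_M$ and $P_jR_M$ would be two distinct minimal primes of $R_M$; but by Theorem \ref{26T} the ring $R_M$ is either a one-dimensional domain or a zero-dimensional local ring, each of which has a unique minimal prime — a contradiction. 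Hence $P_i+P_j=R$ for $i\neq j$.

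Next I would produce the direct product decomposition. Let $N=\sqrt{(0)}=\bigcap_i P_i$ be the nilradical. Comaximality gives $N=\prod_i P_i$, and the Chinese Remainder Theorem yields $R/N\cong\prod_i R/P_i$, which supplies a complete orthogonal system of idempotents in $R/N$. Since $N$ is a nil ideal, these idempotents lift to a complete orthogonal system in $R$, giving $R\cong\prod_i R_i$ with $R_i/\mathrm{nil}(R_i)\cong R/P_i$. Thus each $R_i$ is a TAF-ring by Proposition \ref{11P}, it has a unique minimal prime $N_i:=\mathrm{nil}(R_i)$, and $R_i/N_i$ is a domain of dimension at most one.

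Finally I would classify each factor $R_i$. If $R_i$ is reduced, then $R_i=R_i/N_i$ is a domain, hence either a field (a zero-dimensional local ring whose maximal ideal $(0)$ is trivially nilpotent) or a one-dimensional domain. If $R_i$ is not reduced, i.e. $N_i\neq 0$, I would localize: if $(R_i)_M$ were a one-dimensional domain for every maximal ideal $M$, then $R_i$ would be reduced, so by Theorem \ref{26T} some $(R_i)_M$ is zero-dimensional, forcing its unique prime $M(R_i)_M$ to be minimal and hence $M=N_i$. Since $N_i$ is then both the unique minimal prime and a maximal ideal while $\dim R_i\le 1$, the ring $R_i$ is zero-dimensional local with maximal ideal $N_i$. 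To see $N_i$ is nilpotent, take a TA-factorization $(0)=J_1\cdots J_n$ in $R_i$; each proper $J_l$ has $\sqrt{J_l}=N_i$, so $N_i^2\subseteq J_l$ by \cite[Theorem 2.4]{B}, whence $N_i^{2n}\subseteq J_1\cdots J_n=(0)$. For the ``in particular'' assertion, a unique height-zero prime means $k=1$, so the decomposition has the single factor $R\cong R_1$; as $\dim R=1$ this factor cannot be zero-dimensional, so $R_1$, and hence $R$, is a one-dimensional domain.

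The step I expect to be the main obstacle is the passage from the decomposition of $R/N$ back to $R$ itself, which rests on lifting the complete orthogonal system of idempotents along the nil ideal $N$, together with the accompanying need to rule out a one-dimensional factor carrying nonzero nilpotents. The comaximality argument only decomposes $R$ modulo its nilradical, and the content lies in showing both that this lifts and that any non-reduced piece is \emph{forced} to collapse to a zero-dimensional local ring; the nilpotence of the maximal ideal then follows cleanly from the TA-factorization of $(0)$ via Badawi's description of TA-ideals with prime radical.
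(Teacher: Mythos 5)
Your proof is correct, and it reaches the same decomposition as the paper but by a genuinely different mechanism. The paper exploits the TA-factorization of $(0)$ more directly: by \cite[Theorem 2.4]{B} each factor $H_i$ of $(0)=H_1\cdots H_m$ contains a product of primes, which yields $(0)=P_1^{e_1}\cdots P_n^{e_n}$ with the $P_i$ the distinct height-zero primes; comaximality (from Theorem \ref{26T}, as in your argument) then lets the Chinese Remainder Theorem be applied to the actual ideals $P_i^{e_i}$, giving $R\cong\prod R/P_i^{e_i}$ with no idempotent lifting. Each factor is then classified by noting that if $P_i$ is maximal then $R/P_i^{e_i}$ is zero-dimensional local (with maximal ideal nilpotent of index at most $e_i$), while if $P_i$ is not maximal then $P_iR_M=(0)$ for every $M\supseteq P_i$, so $P_i^{e_i}=P_i$ and the factor is a one-dimensional domain. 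You instead decompose $R/\mathrm{nil}(R)$ by CRT and lift the complete orthogonal system of idempotents along the nil ideal, then rule out non-reduced one-dimensional factors by a separate local argument, and obtain nilpotence of the maximal ideal from a TA-factorization of $(0)$ in the factor. Both routes are sound: the paper's buys a more explicit description of the factors (as $R/P_i^{e_i}$) and avoids the idempotent-lifting machinery, while yours trades the prime-power factorization of $(0)$ for the standard lifting lemma and isolates the non-reducedness analysis cleanly; your explicit proof that the maximal ideal of a zero-dimensional factor is nilpotent is actually more detailed than the paper's, where it is only implicit in $(P/P^{e})^{e}=0$.
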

\begin{proof}
We adapt  the proof of \cite[Theorem 46.11]{G}. Let $R$ be a TAF-ring; so dim$(R)\leq 1$ by Theorem \ref{26T}.
Let $(0)=H_1\cdots H_m$ be a TA-factorization of $(0)$. By \cite[Theorem 2.4]{B}, every $H_i$ contains a product of (height-zero) primes, so 
$(0)=P_1^{e_1}\cdots P_n^{e_n}$ where $P_1$,...,$P_n$ are (the) distinct height-zero primes of $R$. 
By Theorem \ref{26T}, the ideals $P_1$,...,$P_n$ are pairwise comaximal, so, by Chinese Remainder Theorem, we have $R=R/P_1^{e_1}\times\cdots\times R/P_n^{e_n}$. Pick $i$ between $1$ and $n$ and set $P=P_i$ and $e=e_i$. If $P$ is maximal, then $R/P^e$ is local and zero-dimensional. Assume that $P$ is not maximal. By Theorem \ref{26T}, we have $PR_M=(0)$ for each maximal ideal $M$ containing $P$, so $P^e=P$, hence $R/P^e$ is a one-dimensional domain.
\end{proof}


We give an application.
Recall that a ring $R$ is \emph{von Neumann regular} if $R$ is zero-dimensional and reduced (equivalently,  if every localization of $R$ at a maximal ideal is a field).

\begin{corollary}\label{19C}
For a ring $R$, the following are equivalent. 

$(a)$ $R[X]$ is a TAF-ring. 

$(b)$ $R$ is a von Neumann regular TAF-ring. 

$(c)$ $R$ is a finite direct product of fields.
\end{corollary}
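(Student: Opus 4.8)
The plan is to establish the cycle $(a)\Rightarrow(b)\Rightarrow(c)\Rightarrow(a)$, leaning on the structural results of Section~3 together with Propositions \ref{1P} and \ref{11P}. The implication $(c)\Rightarrow(a)$ should be the quickest: if $R=k_1\times\cdots\times k_n$ is a finite product of fields, then $R[X]=k_1[X]\times\cdots\times k_n[X]$ is a finite product of PIDs; each $k_i[X]$ is a ZPI-ring and hence TAF, so $R[X]$ is TAF by (the iterated form of) Proposition \ref{11P}.

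For $(a)\Rightarrow(b)$ I would first note $R\cong R[X]/(X)$, so $R$ is TAF by Proposition \ref{1P}; it then remains to prove $R$ is von Neumann regular, which by the characterization recalled just before the corollary means showing that $R_M$ is a field for every maximal ideal $M$. Fix $M$ and pass to $R_M[X]=S^{-1}R[X]$ with $S=R\setminus M$, which is again TAF by Proposition \ref{1P}. By Theorem \ref{26T} we have $\dim R_M[X]\le 1$; combined with the standard inequality $\dim A[X]\ge \dim A+1$ this forces $\dim R_M=0$, so $R_M$ is zero-dimensional local with maximal ideal $\mathfrak m=MR_M$. The key observation is that $\mathfrak m$ is then the nilradical of $R_M$, whence $\mathfrak m[X]$ is the nilradical of $R_M[X]$ and, since $R_M[X]/\mathfrak m[X]\cong(R_M/\mathfrak m)[X]$ is a domain, it is the unique minimal prime of $R_M[X]$; moreover the chain $\mathfrak m[X]\subsetneq(\mathfrak m,X)$ shows $\dim R_M[X]=1$. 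The ``in particular'' clause of Theorem \ref{27T} then yields that $R_M[X]$ is a domain, so its subring $R_M$ is a domain, and being zero-dimensional it is a field. Hence $R$ is von Neumann regular.

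For $(b)\Rightarrow(c)$ I would invoke Theorem \ref{27T} to write the TAF-ring $R$ as a finite direct product of one-dimensional domains and zero-dimensional local rings with nilpotent maximal ideal. Von Neumann regularity means $R$ is zero-dimensional and reduced: zero-dimensionality rules out the one-dimensional domain factors, while reducedness forces the (nilpotent) maximal ideal of each remaining local factor, being its nilradical, to vanish, making each such factor a field. Thus $R$ is a finite product of fields, closing the cycle.

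I expect the main obstacle to be the heart of $(a)\Rightarrow(b)$: recognizing that the right move is to localize and then verify that $R_M[X]$ meets the precise hypotheses of the domain criterion in Theorem \ref{27T}, namely that it is one-dimensional with a single height-zero prime. Checking that $\mathfrak m[X]$ is simultaneously the nilradical and a prime of $R_M[X]$, with no further minimal primes intruding, is the delicate point; by contrast the dimension bookkeeping (playing $\dim A[X]\ge\dim A+1$ against $\dim R_M[X]\le 1$) is routine once this structural picture is in place.
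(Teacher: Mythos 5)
Your proof is correct, and the engine driving it is the same as the paper's --- the ``in particular'' clause of Theorem \ref{27T} applied to a polynomial ring, played against the inequality $\dim A[X]\ge\dim A+1$ and the bound of Theorem \ref{26T} --- but the route differs in two respects. First, you prove the cycle $(a)\Rightarrow(b)\Rightarrow(c)\Rightarrow(a)$ and reduce by localizing at each maximal ideal, which obliges you to identify $\mathfrak m[X]$ as the nilradical of $R_M[X]$ and check it is prime in order to meet the unique-height-zero-prime hypothesis; the paper instead proves $(a)\Rightarrow(c)$ directly by applying Theorem \ref{27T} to $R\cong R[X]/(X)$ itself, decomposing $R$ as a finite product so that each factor (a domain or a zero-dimensional local ring) visibly has a unique height-zero prime, which then passes to its polynomial ring with no nilradical computation needed. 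Second, for $(b)\Rightarrow(c)$ you reuse the structure theorem \ref{27T}, whereas the paper uses Proposition \ref{2P} to conclude that a von Neumann regular TAF-ring is semilocal and then applies the Chinese Remainder Theorem. Both arguments are complete; the paper's is a little more economical, while yours makes explicit the local verification (nilradical of $R_M[X]$, the chain $\mathfrak m[X]\subsetneq(\mathfrak m,X)$) that the paper's product decomposition lets it bypass, and yields the intermediate statement that a ring with $R[X]$ TAF is locally a field without first invoking the product structure.
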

\begin{proof}
$(a)\Rightarrow (c)$
Since the polynomial ring formation  distributes to a finite direct product of rings, Theorem \ref{27T} enables us to assume that $R$ (and hence also $R[X]$) has a unique height-zero prime ideal. A new application of Theorem \ref{27T} gives that $R[X]$ is a one-dimensional domain, hence $R$ is a field.
$(c)\Rightarrow (a)$
If $R$ is a finite direct product of fields, then  $R[X]$ is a finite direct product of PIDs, so $R[X]$ is a TAF-ring by Proposition \ref{11P}.
$(b)\Rightarrow (c)$ By Proposition  \ref{2P}, $R$ is a semilocal von Neumann regular ring, hence a finite direct product of fields by Chinese Remainder Theorem. $(c)\Rightarrow (b)$ is clear.

\end{proof}


\section{One-dimensional TAF-domains}

%


Recall that  a local  domain $(D,M)$ is called a \textit{pseudo-valuation domain (PVD)} if it is a field or it satisfies (one of) the equivalent conditions in the next theorem.
$V$ is called the {\em associated valuation domain} of $D$. Denote $\{x\in K\mid xM\subseteq M\}$ by $(M:M)$.

\begin{theorem}{\em (Hedstorm and  Houston \cite{HH}, Anderson and Dobbs\cite{AD})}\label{17T}
For a local  domain $(D,M)$ with quotient field $K\neq D$, the following are equivalent.
 
$(a)$ $V=(M:M)$ is a valuation   domain with maximal ideal $M$.

$(b)$ If $x,y\in K-M$, then $xy\in K-M$.

$(c)$ $D$ is the pullback $\pi^{-1}(K)$ of some subfield $K$ of $V/M$, where $V$ is a valuation overring of $D$ with maximal ideal $M$ and $\pi: V\rightarrow V/M$ is the canonical map.

$(d)$ $D$ has a valuation overring $V$ with $Spec(D)=Spec(V)$.
\end{theorem}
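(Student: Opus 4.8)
The four conditions are classical, and I would organize the argument around the equivalence of $(a)$ and $(b)$, which carries the real content; conditions $(c)$ and $(d)$ then turn out to be near-reformulations of $(a)$. The first thing to record is that $(b)$ says precisely that $M$, regarded as a subset of $K$, has multiplicatively closed complement, i.e. for $x,y\in K$, $xy\in M$ forces $x\in M$ or $y\in M$ (strong primeness of $M$). My plan is to prove $(a)\Rightarrow(b)$, $(b)\Rightarrow(a)$, $(a)\Leftrightarrow(c)$, and $(a)\Leftrightarrow(d)$.

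For $(a)\Rightarrow(b)$ I would just check that the maximal ideal $M$ of the valuation domain $V$ is strongly prime in $K$: given $x,y\in K$ with $xy\in M$, split into cases according to whether $x$ and $y$ lie in $V$, using that in a valuation domain every element of $K$ or its inverse lies in $V$ and that $V\setminus M$ consists of units. Each case quickly forces $x\in M$ or $y\in M$; for instance if $x\in V\setminus M$ then $x$ is a unit and $y=x^{-1}(xy)\in M$.

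The implication $(b)\Rightarrow(a)$ is the crux and the step I expect to be the main obstacle, since it must manufacture the entire valuation structure out of strong primeness alone. Here I would establish two facts about $V=(M:M)$. First, that $V$ is a valuation overring: given nonzero $x\in K$ with $xM\not\subseteq M$, pick $m\in M$ with $xm\notin M$; then for any $n\in M$, if $x^{-1}n\notin M$ the product $(xm)(x^{-1}n)=mn$ would lie outside $M$ by strong primeness, contradicting $mn\in M^2\subseteq M$. Hence $x^{-1}M\subseteq M$, so $x^{-1}\in V$, and therefore $x\in V$ or $x^{-1}\in V$ for every $x\in K$. Second, that $M$ is exactly the maximal ideal of $V$: clearly $M\subseteq(M:M)=V$ and $M$ is an ideal of $V$, and for $x\in V\setminus M$ the relation $x\cdot(x^{-1}m)=m\in M$ with $x\notin M$ forces $x^{-1}m\in M$ for all $m\in M$, so $x^{-1}\in V$ and $x$ is a unit. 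The delicate points are getting the multiplicative book-keeping with elements of $K$ right and verifying that $M$ is exactly, not merely contained in, the maximal ideal.

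Finally I would dispatch $(a)\Leftrightarrow(c)$ and $(a)\Leftrightarrow(d)$ as formal consequences. For $(a)\Rightarrow(c)$, once $V=(M:M)$ is a valuation domain with maximal ideal $M$, the residue map $\pi\colon V\to V/M$ satisfies $D=\pi^{-1}(D/M)$ with $D/M$ a subfield of $V/M$, so $(c)$ holds with the subfield being $D/M$; conversely a pullback $\pi^{-1}(k)$ has maximal ideal $M$ and satisfies $(M:M)=V$, because any $x\in(M:M)$ with $x\notin V$ would have $x^{-1}\in M$, forcing $1=xx^{-1}\in M$. For $(a)\Leftrightarrow(d)$, I would note that the primes of a valuation domain $V$ with maximal ideal $M\subseteq D$ all lie in $D$ and are automatically primes of $D$, while conversely $Spec(D)=Spec(V)$ together with the locality of both rings forces the maximal ideals to coincide, returning us to $(a)$ via the same computation $(M:M)=V$. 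Throughout, the single genuinely new idea is the strong-primeness manipulation in $(b)\Rightarrow(a)$; everything else is bookkeeping.
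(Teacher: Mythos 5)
The paper itself does not prove this theorem --- it simply cites \cite[Theorem 2.7]{HH} and \cite[Propositions 2.5 and 2.6]{AD} --- so your proposal is being measured against the literature rather than against an argument in the text. Most of what you write is correct and complete at the level of a sketch: $(a)\Rightarrow(b)$ by the case split on units/non-units of $V$, the strong-primeness manipulation giving $(b)\Rightarrow(a)$ (both that $x\in V$ or $x^{-1}\in V$ for all $x\in K^*$ and that $V\setminus M$ consists of units), the two directions of $(a)\Leftrightarrow(c)$, and $(d)\Rightarrow(a)$ all check out.

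There is, however, a genuine gap in $(a)\Rightarrow(d)$. You only argue that every prime of $V$ lies in $M\subseteq D$ and contracts to itself, which gives $\mathrm{Spec}(V)\subseteq\mathrm{Spec}(D)$; condition $(d)$ asserts equality, so you must also show that every prime of $D$ is a prime \emph{ideal of $V$}, and this is not bookkeeping --- it is the substantive half of the Hedstrom--Houston/Anderson--Dobbs result. A nonmaximal prime $P$ of $D$ is a priori only a $D$-module, and one must first prove it is stable under multiplication by $V$. One way: fix $t\in M\setminus P$ and $0\neq p\in P$; since the ideals of the valuation domain $V$ are totally ordered, either $p\in t^2V$ or $t^2\in pV$, and the latter gives $t^3\in p(tV)\subseteq pM\subseteq pD\subseteq P$, contradicting $t\notin P$. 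So $p=t(tu)$ with $tu\in M\subseteq D$, whence $tu\in P$ by primeness of $P$ in $D$, and then $vp=(tu)(tv)\in P$ for every $v\in V$ because $tv\in M\subseteq D$. Once $P$ is known to be an ideal of $V$, one checks it is strongly prime (for $xy\in P$ with, say, $x/y\in V$, one gets $x^2=(xy)(x/y)\in PV\subseteq P\subseteq M$, so $x\in M\subseteq D$ and then $x\in P$), hence prime in $V$. Without some argument of this kind your proof of $(a)\Rightarrow(d)$ establishes only one inclusion of the asserted equality of spectra.
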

\begin{proof}
See \cite[Theorem 2.7]{HH} and \cite[Propositions 2.5 and 2.6]{AD}.
\end{proof}

Let $D$ be a domain.
Recall that 
the \emph{Picard group Pic$(D)$} of  $D$ is the multiplicative factor group of invertible fractional ideals modulo the subgroup of nonzero principal fractional ideals.
Consequently, $Pic(D)=0$ if and only if   all invertible ideals of $D$ are principal.  

\begin{lemma}\label{4L}
 The following assertions hold.
 
 $(a)$ If $D$ is a TAF-domain with Pic$(D)=0$ and $x\in D$ is an atom, then  $xD$ is a TA-ideal. 
 
 $(b)$ Let  $(D,M)$ be a   local one-dimensional domain such that every principal ideal of $D$ has a TA-factorization. Then $D$ is atomic.
\end{lemma}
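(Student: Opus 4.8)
The plan is to treat the two parts separately, since part $(a)$ is a short factorization argument while part $(b)$ requires developing the $M$-adic structure of $D$.

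For part $(a)$ I would start from a TA-factorization $xD=J_1\cdots J_n$ into proper TA-ideals, which exists because $D$ is a TAF-domain and $xD$ is a proper nonzero ideal. Since $xD$ is invertible (a nonzero principal ideal of a domain) and every factor of an invertible ideal is invertible, each $J_i$ is invertible; as $\mathrm{Pic}(D)=0$, each $J_i$ is principal, say $J_i=a_iD$. Then $x=ua_1\cdots a_n$ for a unit $u$, and the irreducibility of $x$ forces $n=1$, so $xD=J_1$ is a TA-ideal. The one point to check with care is that each $a_i$ is a genuine nonunit (because $J_i$ is proper), so that $x$ being an atom really collapses the product to a single factor.

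For part $(b)$ the goal is to prove that $D$ is atomic, and I would do this by first establishing $\bigcap_{n\ge 1}M^n=(0)$ and then deducing ACCP, which yields atomicity in the standard way. Because $D$ is a one-dimensional local domain, every nonzero proper ideal has radical $M$; in particular every nonzero proper TA-ideal $J$ satisfies $M^2\subseteq J\subseteq M$ by \cite[Theorem 2.4]{B}. First I would rule out $M=M^2$: if it held, the TA-factorization of any nonzero nonunit $a$ would force $aD=M$, making $M$ principal, and Nakayama would give the contradiction $M=(0)$. So $M\ne M^2$, and I pick $\pi\in M\setminus M^2$. Running the argument of Proposition \ref{20P} for the \emph{principal} ideal $\pi D$ (which has a TA-factorization by hypothesis and whose radical is $M$) yields $M^2\subseteq\pi D$, and a short check that no unit multiple of $\pi$ can lie in $M^2$ upgrades this to $M^2=\pi M$; by induction $M^n=\pi^{n-1}M$ for all $n\ge 1$.

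The heart of the matter is the last step. Suppose $0\ne c\in\bigcap_n M^n$ and take a TA-factorization $cD=J_1\cdots J_m$. Then $M^{2m}\subseteq cD\subseteq M^m$, while $c\in M^{2m+1}$; rewriting these relations through $M^n=\pi^{n-1}M$ gives $\pi^{2m-1}M\subseteq cD\subseteq\pi^{2m}D$, which after cancelling $\pi^{2m-1}$ forces $M=\pi D$. But once $M$ is principal, the only ideals between $M^2=\pi^2 D$ and $M=\pi D$ are $\pi D$ and $\pi^2 D$, so every TA-factor of $cD$ is one of these and $cD=\pi^kD$ for some $k$, contradicting $c\in M^{k+1}$. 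Hence $\bigcap_n M^n=(0)$. Finally, a strictly ascending chain of nonzero principal ideals would express $a_1$ as $a_{n+1}$ times a product of $n$ nonunits, placing $a_1\in M^n$ for every $n$ and hence in $\bigcap_n M^n=(0)$, a contradiction; so $D$ satisfies ACCP and is atomic. I expect the main obstacle to be justifying the passage $M^2=\pi M$ and the self-improvement to $M=\pi D$ purely from the principal-ideal hypothesis, without assuming $D$ is Noetherian or a full TAF-ring, since this is exactly where the one-dimensional local structure must be combined with Proposition \ref{20P}.
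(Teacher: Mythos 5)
Your part $(a)$ is essentially identical to the paper's: factor $xD$ into proper TA-ideals, note each factor is invertible hence principal, and use irreducibility of $x$ to force a single factor. For part $(b)$, however, you take a genuinely different and longer route. The paper argues directly that every nonzero nonunit is a product of atoms: assuming $M$ is not principal (else $D$ is a DVR), any proper principal TA-ideal $yD$ satisfies $M^2\subseteq yD$ with $M^2\neq yD$ (equality would make $M$ invertible, hence principal), so $y\notin M^2$ and $y$ is therefore an atom; then for $0\neq x\in M$ each factor in a TA-factorization of $xD$ is invertible, hence principal in the local domain $D$, hence generated by an atom, and $x$ is a unit times a product of atoms. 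Your argument instead develops the $M$-adic filtration: ruling out $M=M^2$, establishing $M^n=\pi^{n-1}M$ for $\pi\in M\setminus M^2$, proving $\bigcap_n M^n=(0)$ by the degree-counting contradiction, and deducing ACCP. Both arguments are correct and each step of yours checks out (the cancellation $\pi^{2m-1}M\subseteq\pi^{2m}D\Rightarrow M\subseteq\pi D$ is valid in a domain, and the final contradiction $cD=\pi^kD$ with $c\in\pi^{k+1}D$ forces $\pi$ to be a unit). What your approach buys is the stronger conclusion that $D$ satisfies ACCP, which the paper only obtains later in Theorem \ref{5T} via the pseudo-valuation structure; what it costs is roughly three times the length and machinery that the direct ``each principal TA-factor is generated by an atom'' observation renders unnecessary for the stated claim.
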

\begin{proof}
$(a)$
Let  $xD=I_1 \cdots I_n$ be a TA-factorization of $xD$.  Then each factor $I_i$ is invertible, hence principal, so $n=1$ because $x$ is an atom. 
$(b)$ 
We may assume that $M$ is not principal, otherwise $D$ is a DVR and the assertion is clear.
If $yD$ is a proper principal TA-ideal, then $M^2\subseteq yD$, so $y\notin M^2$ (note that $M^2\neq yD$, because $M$ is not principal) hence $y$ is an atom. Now pick $x\in M-\{0\}$ and let $xD=I_1\cdots I_n$ be a TA-factorization of $xD$. Then each factor $I_j$ is invertible, hence principal (since $D$ is local), thus  each $I_j$ is generated by some atom, as shown before.
Hence $D$ is atomic.
\end{proof}

For example, $\mathbb{Z}[\sqrt{-7}]$ is not a TAF-ring because it has zero Picard group (use for instance \cite[Theorem 8]{P}) and its atom $3+\sqrt{-7}$ generates a non-TA-ideal 
(see also Example \ref{15E}). 
The following result is a natural extension of \cite[Theorem 5.1, Corollary 5.2]{AM}.
Recall that a domain $D$ {\em satisfies ACCP} or is an {\em ACCP-domain} if every ascending chain of principal ideals in $D$ eventually stops. It is well known that an ACCP-domain is atomic.

\begin{theorem}\label{5T}
For a local  domain $(D,M)$ which is not a field, the following are equivalent. 

 $(a)$ $D$ is a  TAF-domain.
 
 $(b)$ $D$ is one-dimensional and every principal ideal of $D$ has a TA-factorization.

 $(c)$ $D$ is atomic, one-dimensional and every atom of $D$ generates a TA-ideal. 
 
 $(d)$ $D$ is atomic  and $M^2$ is universal (i.e. $M^2\subseteq aD$ for each atom $a\in D$).
 
 $(e)$ $D$ is an atomic PVD.
 
 $(f)$ $D$ is a PVD which satisfies ACCP.
 
 $(g)$ $(M:M)$ is a DVR with maximal ideal $M$.
\\
 If $D$ is Noetherian, we can add:
 
 $(h)$ The integral closure $D'$ of $D$ is a DVR with maximal ideal $M$.
\end{theorem}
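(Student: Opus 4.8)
The plan is to run the cycle $(a)\Rightarrow(b)\Rightarrow(c)\Rightarrow(d)\Rightarrow(g)\Rightarrow(a)$, which establishes that $(a)$--$(d)$ and $(g)$ are equivalent, and then to attach the pseudo-valuation conditions $(e)$, $(f)$ and $(h)$ to $(g)$ using Theorem \ref{17T} and the Anderson--Mott results of \cite{AM}. The first three implications are the routine ones. For $(a)\Rightarrow(b)$ I would invoke Theorem \ref{26T} to get $\dim D=1$, the factorization of principal ideals being part of the definition of a TAF-domain. For $(b)\Rightarrow(c)$, Lemma \ref{4L}$(b)$ already yields that $D$ is atomic; and if $x$ is an atom, then in a TA-factorization $xD=I_1\cdots I_n$ into proper ideals each $I_i$ is invertible (a factor of an invertible ideal) and hence principal since $D$ is local, so that $x$ becomes a product of $n$ nonunits, forcing $n=1$ and making $xD$ itself a TA-ideal. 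For $(c)\Rightarrow(d)$ I would note that for an atom $a$ the ideal $aD$ is TA with $\sqrt{aD}=M$ (the only nonzero prime), so $M^2\subseteq aD$ by \cite[Theorem 2.4]{B}; this is exactly the universality of $M^2$.

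The heart of the argument, and the step I expect to be the main obstacle, is $(d)\Rightarrow(g)$: producing a valuation structure out of the purely multiplicative hypothesis that $M^2\subseteq aD$ for every atom. Set $V=(M:M)$. If $M$ is principal one checks directly that $D$ is a DVR and $V=D$, so assume $M$ is not principal. The key observation is that \emph{every} atom generates the same ideal of $V$: from $M^2\subseteq aD$ one gets $a^{-1}M^2\subseteq D$, and were this all of $D$ then $M^2=aD$ would be invertible, forcing $M$ principal; hence $a^{-1}M^2\subseteq M$, i.e. $a^{-1}M\subseteq V$, giving $M\subseteq aV$, while $a\in M$ gives $aV\subseteq MV=M$. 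Thus $M=aV$ for each atom $a$. Fixing one atom $\pi$ and writing an arbitrary $z\in K^{\times}$ as a quotient of products of atoms (atomicity), every atom equals $\pi$ times a unit of $V$, so $z=\pi^{m}w$ for some $m\in\mathbb{Z}$ and some unit $w$ of $V$; hence $z\in V$ or $z^{-1}\in V$. Therefore $V$ is a valuation domain whose value group is generated by $v(\pi)$, i.e. a DVR, with maximal ideal $\pi V=M$, which is $(g)$.

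For $(g)\Rightarrow(a)$ I would use the resulting description of the ideals of $D$. With $V$ a DVR and $M=\pi V$, one has $M^{n}=\{z\in K:v(z)\ge n\}$ for $n\ge1$, and for any nonzero proper ideal $I$ with $n=\min\{v(x):0\neq x\in I\}$ the inclusions $M^{n+1}\subseteq I\subseteq M^{n}$ hold. Peeling off the atom $\pi$ repeatedly gives $I=\pi^{\,n-1}J$ with $J=\pi^{-(n-1)}I$ an ideal of $D$ satisfying $M^2\subseteq J\subseteq M$; such a $J$ is a TA-ideal by \cite[Theorem 2.4]{B}, and $\pi D$ is a TA-ideal because $\pi$ is an atom, so $I=(\pi D)^{\,n-1}J$ is a TA-factorization (the zero ideal is prime, hence already TA). This closes the cycle and yields $(a)\Leftrightarrow(b)\Leftrightarrow(c)\Leftrightarrow(d)\Leftrightarrow(g)$.

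Finally I would dispatch the pseudo-valuation conditions. Given $(g)$, Theorem \ref{17T}$(a)$ says $D$ is a PVD, and the factorization of nonzero nonunits into valuation-one atoms shows $D$ is atomic, giving $(e)$; conversely $(e)$, $(f)$ and $(g)$ are the equivalent conditions isolated by Anderson and Mott in \cite[Theorem 5.1, Corollary 5.2]{AM}, so I would cite that for $(e)\Leftrightarrow(f)\Leftrightarrow(g)$ (alternatively, ACCP in $(f)$ is immediate once $V$ is a DVR, since a properly ascending chain of principal ideals forces a strictly descending sequence of values in $\mathbb{Z}_{\ge0}$). For the Noetherian addendum, $M$ is finitely generated, so every element of $(M:M)$ is integral over $D$ and hence $(M:M)\subseteq D'$; since a valuation domain is integrally closed, $(g)$ gives $(M:M)=D'$, and $(g)\Leftrightarrow(h)$ follows because both assert that this common overring is a DVR with maximal ideal $M$.
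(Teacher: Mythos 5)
Your proposal is correct, and most of its skeleton coincides with the paper's: $(a)\Rightarrow(b)$ via Theorem \ref{26T}, $(b)\Rightarrow(c)$ via Lemma \ref{4L} together with principality of invertible ideals in a local domain, $(c)\Rightarrow(d)$ via \cite[Theorem 2.4]{B}, and the explicit ``peel off $x^{n-1}$'' argument for $(g)\Rightarrow(a)$ is essentially the one in the paper. The genuine divergence is at $(d)\Rightarrow(g)$: the paper simply cites \cite[Theorem 5.1, Corollary 5.2]{AM} for the equivalence of $(d)$, $(e)$ and $(g)$, whereas you give a self-contained argument (from $M^2\subseteq aD$ and non-principality of $M$ you get $a^{-1}M\subseteq (M:M)=V$, hence $M=aV$ for \emph{every} atom $a$, and atomicity then writes every element of $K^{\times}$ as $\pi^{m}\cdot(\text{unit of }V)$, making $V$ a DVR with maximal ideal $M$). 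That argument is sound and makes this implication independent of \cite{AM}; note, however, that you still invoke \cite{AM} for $(e)\Rightarrow(g)$, so the reference is not eliminated, and the paper's choice keeps the proof shorter. Likewise you replace the citation of \cite[Corollary 3.4]{HH} for $(g)\Leftrightarrow(h)$ by the determinant-trick identification $(M:M)=D'$ in the Noetherian case, which is also fine. Two small slips worth correcting: the fact that $M^2\subseteq J\subseteq M$ forces $J$ to be a TA-ideal is the sufficiency statement \cite[Theorem 3.1]{B}, not the structure theorem \cite[Theorem 2.4]{B}; and in $(g)\Rightarrow(a)$ the reason $\pi D$ is a TA-ideal is not ``because $\pi$ is an atom'' but because $M^2=\pi M\subseteq \pi D$, i.e., the same $n=1$ case of your own argument.
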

 \begin{proof}
We may assume that $D$ is not a DVR.
$(a)\Rightarrow (b)$ follows from Theorem \ref{26T}.
$(b)\Rightarrow (c)$ follows from Lemma \ref{4L}, because a local domain has zero Picard group.
$(c)\Rightarrow (d)$ For any atom $a\in D$, we have that $aD$ is an $M$-primary TA-ideal, so  $M^2 \subseteq aD$, cf. \cite[Theorem 2.4]{B}. Thus $M^2$ is universal. 
The equivalence of $(d)$, $(e)$ and $(g)$ is given in \cite[Theorem 5.1, Corollary 5.2]{AM}.
$(f)\Rightarrow (e)$ is clear.
$(g)\Rightarrow (a)$ 
By Theorem \ref{17T}, $D$ is a PVD and $B=(M:M)$ is its associated DVR, so $D$ is one-dimensional because Spec$(D)=$ Spec$(B)$.
Pick $x\in M$ such that $M=xB$ and let  $I$ be a proper nonzero  ideal of $D$. As $B$ is a DVR, we have $IB=M^n$ for some $n\geq 1$. If $n=1$, then $M^2 = M(IB)=MI \subseteq I$, so $I$ is a TA-ideal of $D$, cf. \cite[Theorem 3.1]{B}. Next, suppose that $n\geq 2$. As $I\subseteq IB=M^n=x^{n-1}M\subseteq x^{n-1}D$, we get $I=x^{n-1}J$ for some ideal $J$ of $D$. Then $M^n=IB=M^{n-1}(JB)$, so $JB=M$ because  $M=xB$. By case $n=1$, it follows that $J$ and $xD$ are TA-ideals of $D$, so $I=(xD)^{n-1}J$ is a TA-factorization of $I$. Thus $D$ is a TAF-domain. 
$(g)\Rightarrow (f)$ is a well known consequence of the fact that $B=(M:M)$ is a DVR and $U(B)\cap D=U(D)$. When $D$ is Noetherian, the equivalence between $(g)$ and $(h)$ follows from \cite[Corollary 3.4]{HH}.
\end{proof}

Recall that  a domain  $D$ has \textit{finite character} if every $x\in D - \lbrace0\rbrace$ 
belongs to only finitely many maximal ideals of $D$. 

\begin{theorem}\label{3T}
For  a  domain $D$, the following are equivalent.

$(a)$ $D$ is a TAF-domain. 

$(b)$  $D$ has finite character and $D_{M}$ is a TAF-domain for each $M\in $ Max$(D)$.

$(c)$ $D$ has finite character and $D_{M}$ is an atomic PVD for each $M\in $ Max$(D)$.

$(d)$ $D$ has finite character and $D_{M}$ is an ACCP PVD for each $M\in $ Max$(D)$.

$(e)$ $D$ is a one-dimensional domain which has finite character and every principal ideal of $D$ has a TA-factorization.

$(f)$ $D$ is a one-dimensional ACCP-domain which has finite character and  every principal ideal generated by an atom  has a TA-factorization.
\end{theorem}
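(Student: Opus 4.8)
Theorem 4.8 establishes the equivalence of six characterizations of a TAF-domain. Let me plan a proof.

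<br>

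The plan is to prove the equivalences through a cycle together with some back-references, exploiting the local characterization already available in Theorem 4.6 (labeled \ref{5T}).

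First, the core strategy is to reduce the global statement to the local one via the finite character hypothesis. The equivalences $(b)\Leftrightarrow(c)\Leftrightarrow(d)$ are essentially immediate from Theorem \ref{5T}: once $D$ has finite character, the three conditions on $D_M$ (being a TAF-domain, being an atomic PVD, being an ACCP PVD) are equivalent for each maximal ideal $M$ by the equivalences $(a)\Leftrightarrow(e)\Leftrightarrow(f)$ of that theorem. So the real content lies in connecting the global condition $(a)$ to the local conditions and to the two "principal-ideal-factorization" conditions $(e)$ and $(f)$.

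<br>

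For $(a)\Rightarrow(b)$, I would argue that a TAF-domain has finite character using Proposition \ref{2P}: given nonzero $x$, the maximal ideals containing $x$ are among the finitely many minimal primes over $xD$ (since $\dim D\leq 1$ by Theorem \ref{26T}, every prime containing $x$ that is maximal is minimal over $xD$). That each $D_M$ is a TAF-domain is Proposition \ref{1P}. For the converse direction feeding back into $(a)$, the key step is to show that finite character plus "locally TAF" forces global TA-factorization. Here I would take a proper nonzero ideal $I$, note it lies in only finitely many maximal ideals $M_1,\dots,M_k$ (finite character applied to any nonzero element of $I$), localize to get TA-factorizations $I D_{M_j}=\prod_i (I_{ij}D_{M_j})$, and then glue. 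The gluing uses one-dimensionality and comaximality: since $D$ is one-dimensional with finite character, distinct maximal ideals are comaximal and $I$ is an intersection (equivalently a finite product after handling comaximality) of its localized pieces pulled back to $D$. This is the analogue of the standard Dedekind-type argument that a one-dimensional domain of finite character in which every localization factors nicely is itself a factorization domain.

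<br>

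For the principal-ideal conditions, $(a)\Rightarrow(e)$ is clear once we know $\dim D\leq 1$ (Theorem \ref{26T}) and finite character (just shown), since every ideal, in particular every principal ideal, has a TA-factorization. The implications $(e)\Rightarrow(f)$ and $(f)\Rightarrow(c)$ (or directly closing the loop back to $(a)$) are where I would use Lemma \ref{4L}: part $(b)$ of that lemma shows a local one-dimensional domain in which every principal ideal has a TA-factorization is atomic, giving ACCP-type control, and the atoms generate TA-ideals locally, which by Theorem \ref{5T}$(c)$ makes each $D_M$ an atomic PVD. The main obstacle I anticipate is the gluing step in proving a locally-TAF finite-character domain is globally TAF: one must verify that the locally defined TA-factors can be realized by genuine ideals of $D$ and that their product recovers $I$ on the nose, not merely up to localization. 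Controlling this requires that outside the finitely many relevant maximal ideals the ideal $I$ is the whole ring locally, and that comaximality lets the Chinese-Remainder-style patching produce an honest global TA-factorization; verifying that each patched factor is actually a TA-ideal of $D$ (using that TA-ideals localize well, Proposition \ref{1P} and \cite[Lemma 3.13]{B}, and that a locally-TA ideal supported at one maximal ideal is TA) is the delicate point.
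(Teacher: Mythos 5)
Your proposal is correct and follows essentially the same route as the paper: finite character via Proposition \ref{2P} and localization via Proposition \ref{1P} for $(a)\Rightarrow(b)$, reduction of $(b)\Rightarrow(a)$ to $M$-primary ideals by writing $I=\prod_i(ID_{M_i}\cap D)$ and contracting the local TA-factors, and Theorem \ref{5T} together with Lemma \ref{4L} and the ``finite character plus locally ACCP implies ACCP'' fact for the remaining equivalences. The ``delicate point'' you flag is handled in the paper exactly as you suggest, by noting that each contracted factor $J_i\cap D$ is $M$-primary and is the inverse image of a TA-ideal, hence a TA-ideal.
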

\begin{proof}
By Theorems \ref{26T} and \ref{5T}, we may assume that $D$ is one-dimensional (the field case is trivial).
$(a)\Rightarrow (b)$ follows from Propositions \ref{1P} and  \ref{2P}.
$(b)\Rightarrow (a)$ 
Every nonzero ideal $I$ of $D$ is a finite product of primary ideals. Indeed, we can check locally that $I=\prod_{i=1}^n (ID_{M_i}\cap D)$ where $V(I)=\{M_1,...,M_n\}$ (alternatively, we can invoke Jaffard's Theorem \cite[Theorem 2.1.5]{FHL}, because $D$ is h-local).
So it suffices to show that every 
nonzero $M$-primary ideal $I$ has a  TA-factorization.  As $D_{M}$ is a TAF-domain, we have $ID_M=J_1 \cdots J_n$ where each $J_i$ is a TA-ideal of $D_{M}$. Then $H_i:=J_i\cap D$ is an $M$-primary ideal of $D$ for $i=1,...,n$ and hence $H:=H_1 \cdots H_n$ is  $M$-primary. 
We have
$HD_{M}=(H_1   \cdots H_n)D_{M}=J_1 \cdots J_n=ID_{M}$, so $I=ID_M\cap D=HD_M\cap D=H=H_1  \cdots H_n$, because $I$ and $H$ are $M$-primary ideals.
Finally,  each $H_i$ is a  TA-ideal being the inverse image of the TA-ideal $I_i$ of $D_M$.
The equivalence of $(b)$, $(c)$, $(d)$ and $(e)$ follows from Theorem \ref{5T}.
$(d)\Rightarrow (f)$ follows from the well known fact that a domain is ACCP if $D$ has finite character and $D$ is locally ACCP (for instance, adapt the proof of \cite[Example 1.1, page 203]{N}).
$(f)\Rightarrow (e)$ is clear.
\end{proof}

\begin{corollary}\label{13C}
For a  Noetherian domain $D$ which is not a field, the following are equivalent.

$(a)$ $D$ is a TAF-domain.

$(b)$  $D_M$ is a TAF-domain for each $M\in$Max$(D)$.

$(c)$  $D_M$ is a PVD for each $M\in$Max$(D)$.

$(d)$   $D'_M$ is a DVR with maximal ideal $MD_M$ for each $M\in$Max$(D)$.

$(e)$  $D$ is one-dimensional and every principal ideal generated by an atom  has a TA-factorization.
\end{corollary}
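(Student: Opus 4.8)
The plan is to derive every equivalence from the two structure theorems already in hand, Theorem \ref{3T} for global TAF-domains and Theorem \ref{5T} for local ones, exploiting that a Noetherian domain is automatically atomic and satisfies ACCP. Under the Noetherian hypothesis the only genuinely new ingredient that Theorem \ref{3T} still demands is \emph{finite character}, so the whole strategy organizes around manufacturing finite character; I would obtain it from one-dimensionality, which will in turn drop out of any one of the local conditions (b)--(d). Since $D$ is a domain that is not a field, every maximal ideal is nonzero, so each $D_M$ is a Noetherian local domain that is not a field, and Theorem \ref{5T} applies to it verbatim.

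First I would record the local bookkeeping. Each $D_M$ is Noetherian and local, hence ACCP and atomic; therefore for $D_M$ the three notions ``TAF-domain'', ``atomic PVD'' and ``PVD'' all coincide by the equivalences $(a)\Leftrightarrow(e)\Leftrightarrow(f)$ of Theorem \ref{5T}, and by the Noetherian addendum $(h)$ they are equivalent to ``$(D_M)'$ is a DVR with maximal ideal $MD_M$''. Using that integral closure commutes with localization, so that $(D')_M=(D_M)'$ and $D'_M$ is correctly read as $(D_M)'$, this yields at once the chain of purely local equivalences $(b)\Leftrightarrow(c)\Leftrightarrow(d)$.

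Next I would link these local conditions to one-dimensionality and then close the loop. A Noetherian local PVD that is not a field has integral closure a DVR (dimension one) with the same spectrum by Theorem \ref{17T}$(d)$, so it is one-dimensional; alternatively one applies Theorem \ref{26T} to $D_M$. Hence under (b) (equivalently (c) or (d)) every $D_M$ is one-dimensional, so $D$ is one-dimensional, and for a one-dimensional Noetherian domain finite character is automatic, because for $0\neq x\in D$ the ring $D/xD$ is Artinian and thus has only finitely many primes, i.e.\ finitely many maximal ideals contain $x$. With finite character available, Theorem \ref{3T} finishes the job: $(a)\Leftrightarrow(b)$ is exactly $(a)\Leftrightarrow(b)$ of Theorem \ref{3T} (the forward implication also being immediate from Proposition \ref{1P}), while $(a)\Leftrightarrow(e)$ is the equivalence $(a)\Leftrightarrow(f)$ of Theorem \ref{3T}, since ACCP and finite character come free in the one-dimensional Noetherian setting and the atom condition of (e) is precisely the one appearing in (f).

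The step requiring care, and the only place the argument could appear circular, is the supply of finite character: Theorem \ref{3T} delivers $(b)\Rightarrow(a)$ only once finite character is known, yet finite character can fail for higher-dimensional Noetherian domains. The resolution is the ordering above, namely to extract one-dimensionality from the local hypotheses \emph{first} and only then deduce finite character from the one-dimensional Noetherian structure; after that, all remaining implications are formal consequences of Theorems \ref{3T} and \ref{5T}.
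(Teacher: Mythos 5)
Your proposal is correct and follows essentially the same route as the paper, whose proof is just the terse observation that a Noetherian domain satisfies ACCP (hence is atomic) and a one-dimensional Noetherian domain has finite character, followed by an appeal to Theorems \ref{3T} and \ref{5T}. Your write-up merely makes explicit the details the paper leaves to the reader, in particular the correct ordering (extract one-dimensionality from the local hypotheses first, then deduce finite character) that makes the appeal to Theorem \ref{3T} non-circular.
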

\begin{proof}
Clearly a Noetherian domain satisfies ACCP (hence is atomic) and   a  one-dimensional Noetherian domain has finite character. Apply Theorems \ref{3T} and   \ref{5T}.
\end{proof}

\begin{remark}
The Noetherian {\em GMPD} domains considered in \cite{DR} are exactly the TAF-domains with all ideals two-generated, cf. \cite[Theorem 8]{DR}. So, using a Nagata-Hochster construction as done in \cite[Lemma 16 and Theorem 18]{DR}, we can  produce  a Noetherian TAF-domain $D$ with $Spec(D)$ of prescribed cardinality such that $D_M$ is not integrally closed for each $M\in$ Max$(D)$.
\end{remark}

\begin{corollary}\label{20C}
 If $D$ is a Noetherian TAF-domain, then so is every overring  of $D$.
\end{corollary}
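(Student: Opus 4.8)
The plan is to use the Noetherian hypothesis to reduce, via Corollary \ref{13C}, to showing that each localization of the overring is a pseudo-valuation domain, and then to realize that localization as a ring trapped between a PVD and its associated DVR.

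Let $T$ be an overring of $D$, so $D\subseteq T\subseteq K$ with $K$ the quotient field; if $T=K$ it is a field and hence trivially a TAF-domain, so I assume $T\neq K$. Since $D$ is a one-dimensional Noetherian domain (Theorem \ref{26T} together with the Noetherian hypothesis, the field case being excluded), the Krull--Akizuki theorem (see \cite{G} or \cite{K}) shows that $T$ is again one-dimensional and Noetherian. By the equivalence $(a)\Leftrightarrow(c)$ of Corollary \ref{13C}, it then suffices to prove that $T_N$ is a PVD for every $N\in\mathrm{Max}(T)$.

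Fix $N$ and put $M=N\cap D$. I would first note that $M$ is a nonzero (hence maximal, as $\dim D=1$) prime with $D_M\subseteq T_N\subseteq K$; indeed $M\neq(0)$, since otherwise every nonzero element of $D$ would be a unit of $T_N$, forcing $T_N=K$. By Corollary \ref{13C}$(d)$ the integral closure $V:=(D_M)'$ is a DVR whose maximal ideal $\mathfrak m:=MD_M$ is also the maximal ideal of $D_M$. The decisive step is to locate $T_N$ between $D_M$ and $V$: from $D_M\subseteq T_N$ one gets $V=(D_M)'\subseteq (T_N)'$, and since $V$ is a DVR its only overrings in $K$ are $V$ and $K$; as $(T_N)'$ is integral over the one-dimensional domain $T_N$ it has dimension one and cannot be $K$, so $(T_N)'=V$ and therefore $D_M\subseteq T_N\subseteq V$.

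Finally I would identify $T_N$ as a PVD through the pullback description of Theorem \ref{17T}. Because $V$ is integral over $D_M$ with common maximal ideal $\mathfrak m$, the residue field $F:=V/\mathfrak m$ is algebraic over $k:=D_M/\mathfrak m$, and the chain $\mathfrak m\subseteq D_M\subseteq T_N\subseteq V$ exhibits $\mathfrak m$ as an ideal of $T_N$ with $k\subseteq T_N/\mathfrak m\subseteq F$. Since any ring between a field and one of its algebraic extension fields is itself a field, $k':=T_N/\mathfrak m$ is a field; a routine check (using that $1+\mathfrak m$ consists of units of $V$ whose inverses lie back in $1+\mathfrak m\subseteq T_N$) then shows $T_N$ is local with maximal ideal $\mathfrak m$. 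Thus $T_N=\pi^{-1}(k')$ is the pullback of the subfield $k'$ of $V/\mathfrak m$ along $\pi\colon V\to V/\mathfrak m$, so $T_N$ is a PVD by Theorem \ref{17T}$(c)$, and Corollary \ref{13C} yields that $T$ is a TAF-domain. I expect the main obstacle to be the structural identification $(T_N)'=V$ and the verification that the maximal ideal of $T_N$ is exactly $\mathfrak m$, since these are precisely what confine $T_N$ to the pullback form; once they are in place the PVD conclusion is formal.
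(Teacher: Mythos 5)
Your proof is correct, and its skeleton coincides with the paper's: reduce via Krull--Akizuki to a one-dimensional Noetherian overring, localize at a maximal ideal $N$ lying over $M\in\mathrm{Max}(D)$, show the localization is a PVD, and conclude with Corollary \ref{13C}. The difference is in how the PVD step is handled. The paper simply quotes a result of Hedstrom and Houston (\cite[Corollary 3.3]{HH}) asserting that $E_Q$ is a PVD because it is an overring of the PVD $D_P$. You instead prove this from scratch in the Noetherian setting: using Corollary \ref{13C}$(d)$ you take $V=(D_M)'$, a DVR sharing its maximal ideal $\mathfrak m$ with $D_M$, show $(T_N)'=V$ (so $D_M\subseteq T_N\subseteq V$), observe that $\mathfrak m$ is a common ideal and that $T_N/\mathfrak m$ is an intermediate field of the algebraic extension $D_M/\mathfrak m\subseteq V/\mathfrak m$, and invoke the pullback characterization of Theorem \ref{17T}$(c)$. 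All the individual steps check out (in particular $(T_N)'\neq K$ because it is integral over the one-dimensional $T_N$, and $T_N=\pi^{-1}(T_N/\mathfrak m)$ because $\mathfrak m\subseteq T_N$). What your route buys is self-containedness --- it avoids the external overring-of-a-PVD theorem --- at the cost of leaning harder on the Noetherian hypothesis through the DVR form of the integral closure; the paper's citation is shorter and would transfer more directly to any situation where the local rings of $D$ are known to be PVDs.
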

\begin{proof}
Let $E$ be an overring of $D$ which is not a field.  By Theorem \ref{26T}, $D$ is
one-dimensional, so $E$ is Noetherian and one-dimensional, by
Krull-Akizuki Theorem. Let $Q\in $ Max$(E)$ and $P:=Q\cap D\in$Max$(D)$. By Corollary \ref{13C}, it follows that $D_P$ is a PVD, so $E_Q$ is a PVD, cf. \cite[Corollary 3.3]{HH}. A new appeal to Corollary \ref{13C} completes the proof.
\end{proof}

\begin{corollary}\label{22C}
 Let $K\subseteq L$ be a field extension. Then $K+XL[X]$ is a TAF-domain.
\end{corollary}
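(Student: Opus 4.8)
The plan is to invoke the local--global criterion of Theorem \ref{3T}, specifically the implication $(b)\Rightarrow(a)$: it suffices to prove that $D=K+XL[X]$ has finite character and that $D_M$ is a TAF-domain for every $M\in\mathrm{Max}(D)$. We may assume $K\neq L$, since otherwise $D=L[X]$ is a PID and hence a TAF-domain. Set $T=L[X]$, a PID, and note that the conductor of $D$ in $T$ is $\mathfrak{c}=XL[X]$, which is a maximal ideal of $D$ with $D/\mathfrak{c}\cong K$; call it $M_0$. By the classical theory of conductor squares for the $K+XL[X]$ construction, every other maximal ideal of $D$ has the form $Q\cap D$ with $Q$ a maximal ideal of $T$ distinct from $(X)$, and for such $Q$ one has $D_{Q\cap D}\cong T_Q$, a DVR; since a DVR is a Dedekind domain, it is a TAF-domain.

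For finite character, I would fix $0\neq d\in D\subseteq T$. Because $T$ is a UFD, $d$ is divisible by only finitely many nonassociate irreducibles, so $d$ lies in only finitely many maximal ideals $Q$ of $T$; together with the possible membership $d\in M_0$, this shows $d$ belongs to only finitely many maximal ideals of $D$. Hence $D$ has finite character.

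The crux is the remaining localization $D_{M_0}$. I would set $V=T_{(X)}=L[X]_{(X)}$, a DVR with maximal ideal $N=XV$ and residue field $V/N\cong L$, and let $\pi\colon V\to L$ be the residue map. The claim is that $D_{M_0}=\pi^{-1}(K)$. The inclusion $D_{M_0}\subseteq\pi^{-1}(K)$ is immediate, since any $d/s$ with $s\in D\setminus M_0$ has $s(0)\in K\setminus\{0\}$, so $\pi(d/s)=d(0)/s(0)\in K$; for the reverse inclusion, given $g/h\in V$ with $g(0)/h(0)\in K$, I would rewrite it as $(g/h(0))/(h/h(0))$, whose numerator lies in $D$ (its constant term is $g(0)/h(0)\in K$) and whose denominator lies in $D\setminus M_0$ (its constant term is $1$). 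Thus $D_{M_0}=\pi^{-1}(K)$ is the pullback of the subfield $K$ of $V/N$, so by Theorem \ref{17T} it is a PVD with associated valuation domain $V$; since $V$ is a DVR and its maximal ideal $N$ is the maximal ideal of $D_{M_0}$, Theorem \ref{5T}$(g)\Rightarrow(a)$ shows that $D_{M_0}$ is a TAF-domain.

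Having shown that $D$ has finite character and that $D_M$ is a TAF-domain for every $M\in\mathrm{Max}(D)$, I would conclude that $D$ is a TAF-domain by Theorem \ref{3T}. The main obstacle is the identification $D_{M_0}=\pi^{-1}(K)$: one must recognize that inverting only those elements of $D$ whose constant term lies in $K^{\times}$ already recovers the full pullback over the DVR $V$, so that $D_{M_0}$ is a genuine PVD with a discrete rank-one associated valuation domain.
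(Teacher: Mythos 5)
Your proof is correct and follows essentially the same route as the paper: identify the maximal ideals of $D=K+XL[X]$ via the conductor $XL[X]$, observe finite character, note that the localizations away from $XL[X]$ are DVRs while $D_{XL[X]}$ is the pullback $K+XL[X]_{(X)}$, hence a PVD with DVR associated valuation ring (so TAF by Theorems \ref{17T} and \ref{5T}), and conclude by Theorem \ref{3T}. You simply spell out in more detail the pullback identification of $D_{XL[X]}$ and the finite-character argument that the paper compresses into ``standard pull-back arguments.''
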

\begin{proof}
 Set $D=K+XL[X]$. By standard pull-back arguments, the spectrum of $D$ is $\{ M_f:=fL[X]\cap D\mid f\in L[X]$ irreducible  with $f(0)\neq 0\}\cup \{ XL[X], (0)\}$. So $D$ is one-dimensional of finite character. We have that  $D_{M_f}=L[X]_{fL[X]}$ is a DVR. Also, $D_{XL[X]}=K+XL[X]_{XL[X]}$ is a PVD, hence a TAF-domain, by Theorems \ref{17T} and  \ref{5T}. By Theorem \ref{3T}, $D$ is a TAF-domain.
\end{proof}

\begin{example}\label{25E}
Let $\mathbb{A}$ be the field of all algebraic numbers and let $\pi_1,\pi_2\in \mathbb{C}$ be algebraically independent over $\mathbb{A}$.
By Corollary \ref{22C},
$\mathbb{A}+X\mathbb{C}[X]$ is a one-dimensional integrally closed TAF-domain which is not Noetherian. Its overring $R=\mathbb{A}[\pi_1,\pi_2]+X\mathbb{C}[X]$ is not a TAF-domain, because the polynomial ring $\mathbb{A}[\pi_1,\pi_2]$
is an epimorphic image of $R$, so Proposition \ref{1P} and Theorem \ref{26T} apply. Therefore an overring of a TAF-domain is not necessarily TAF (see also Corollary  \ref{20C}).
\end{example}

\begin{corollary}\label{18C}
For a  Noetherian domain $D$ with $(D : D')\neq (0)$ the following are equivalent.

$(a)$ $D$ is a TAF-domain.

$(b)$ $D'$ is a Dedekind domain and $D_M$ is a PVD for each $M\in V(D:D')$.

$(c)$ There exist  a Dedekind overring $E$ of $D$, distinct maximal ideals $M_1$,...,$M_n$   of $E$ and   finite field extensions $K_i\subseteq E/M_i$ ($i=1,...,n$) such that $D$ is the pullback domain   $\pi^{-1}(\prod_{i=1}^n K_i)$, where $\pi:E\rightarrow \prod_{i=1}^n E/M_i$ is the canonical map.
\end{corollary}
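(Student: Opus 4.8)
The plan is to get $(a)\Leftrightarrow(b)$ almost for free from Corollary~\ref{13C}, and to spend the real effort on the conductor-square description $(b)\Leftrightarrow(c)$. Throughout I put $E=D'$ and let $\mathfrak{c}=(D:D')\neq(0)$ be the conductor, an ideal of both $D$ and $E$. A preliminary remark I will use repeatedly: since $\mathfrak c\neq(0)$ and $D$ is Noetherian, picking $c\in\mathfrak c\setminus\{0\}$ gives $cE\subseteq D$, i.e. $E\subseteq c^{-1}D\cong D$, so $E$ is a finitely generated $D$-module; in particular $E$ is integral over $D$ and all the residue extensions occurring below are finite.

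First I would settle $(a)\Leftrightarrow(b)$. For $(a)\Rightarrow(b)$: a TAF-domain is one-dimensional by Theorem~\ref{26T}, so $E$ is a one-dimensional integrally closed Noetherian domain by Krull--Akizuki, i.e. Dedekind; and $D_M$ is a PVD for every $M$ by Corollary~\ref{13C}, in particular for $M\in V(\mathfrak c)$. For $(b)\Rightarrow(a)$ the point is to promote the PVD hypothesis from $V(\mathfrak c)$ to all of $\mathrm{Max}(D)$: if $\mathfrak c\not\subseteq M$ then $E\subseteq D_M$, so $D_M$ is a localization of the Dedekind domain $E$, hence a DVR and a fortiori a PVD. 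Thus $D_M$ is a PVD for every $M$ and Corollary~\ref{13C} yields $(a)$.

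For $(b)\Rightarrow(c)$ I take $E=D'$ together with the finitely many maximal ideals $M_1,\dots,M_n$ of $E$ containing $\mathfrak c$, and set $m_i:=M_i\cap D$. The decisive local computation is that, by Theorem~\ref{5T}(g),(h), the PVD $D_{m_i}$ has associated DVR $E_{M_i}=(m_iD_{m_i}:m_iD_{m_i})$ sharing its maximal ideal, so the local conductor equals $M_iE_{M_i}$; consequently each $m_i$ has a unique prime of $E$ above it and, reading this off at every $M_i$, the global conductor is the radical ideal $\mathfrak c=M_1\cap\dots\cap M_n$ with the $m_i$ pairwise distinct. The Chinese Remainder Theorem in $E$ then gives $E/\mathfrak c=\prod_i E/M_i$, and $K_i:=\mathrm{im}(D\to E/M_i)=D/m_i$ is a subfield, finite over $E/M_i$ by the preliminary remark. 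Applying the Chinese Remainder Theorem in $D$ to the distinct maximal ideals $m_i$ (whose intersection is $(\bigcap_i M_i)\cap D=\mathfrak c$) shows the image of $D$ in $\prod_i E/M_i$ is exactly $\prod_i K_i$; since $\ker\pi=\mathfrak c\subseteq D$, this is precisely $D=\pi^{-1}(\prod_i K_i)$.

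For $(c)\Rightarrow(b)$ I reverse the computation. Here $\mathfrak c=\bigcap_i M_i$ is nonzero, $E/\mathfrak c=\prod_i E/M_i$ is module-finite over $D/\mathfrak c=\prod_i K_i$, and $\mathfrak c$ is a finitely generated ideal of the Noetherian ring $D$, so $E$ is module-finite, hence integral, over $D$; as $E$ is Dedekind (so integrally closed) this forces $E=D'$. Finally, for $M=M_i\in V(\mathfrak c)$ the localization $D_{m_i}$ is the pullback of $K_i\subseteq E_{M_i}/M_iE_{M_i}$ along the DVR $E_{M_i}$, so $D_{m_i}$ is a PVD by Theorem~\ref{17T}(c). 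I expect the main obstacle to be the third paragraph: correctly converting the purely local PVD data of Theorem~\ref{5T} into the global assertions that $\mathfrak c$ is radical, that the $M_i$ contract to distinct maximal ideals $m_i$, and that the pullback is taken over the product of fields $\prod_i K_i$ rather than over some larger image of $D$ in $E/\mathfrak c$ --- the two Chinese-Remainder matchings are what make this precise.
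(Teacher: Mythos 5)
Your proposal is correct and follows essentially the same route as the paper: $(a)\Leftrightarrow(b)$ via Corollary \ref{13C} (with $D_M$ a DVR off the conductor), the local computation showing the conductor is the radical ideal $M_1\cap\cdots\cap M_n$ because each PVD localization has local conductor equal to the maximal ideal of its associated DVR, the two Chinese Remainder decompositions giving the pullback description, and Theorem \ref{17T}$(c)$ applied to the localized pullback for $(c)\Rightarrow(b)$. The only cosmetic difference is that you invoke the ambient Noetherian hypothesis and a finite-module extension argument where the paper uses Eakin--Nagata, and you arrange the implications as $(a)\Leftrightarrow(b)$, $(b)\Leftrightarrow(c)$ rather than the paper's cycle $(a)\Rightarrow(c)\Rightarrow(b)\Rightarrow(a)$.
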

\begin{proof}  
We may assume that $D$ is not a field.
$(a)\Rightarrow  (c)$. By Theorem \ref{26T}, $D$ has dimension one, so
$D'$ is a Dedekind domain, hence $(D : D')$ is a product $M_1\cdots M_n$ of  distinct maximal ideals. Indeed, if $M_i=M_j=P$ for some $i<j$, then $PD_P=(D_P:D'_P)=(D:D')D_P\subseteq P^2D_P$, a contradiction.   Set $N_i=M_i\cap D\in $ Max$(D)$. By Corollary \ref{13C}, $D'_{N_i}=D'_{M_i}$ is a DVR, so $N_1,...,N_n$ are distinct. We have $(D:D')=M_1\cap\cdots \cap M_n\cap D=N_1\cap\cdots \cap N_n$, so by Chinese Remainder Theorem we get $D/(D:D')=\prod_{i=1}^n D/N_i$. In other words, $D$ is the pullback $\pi^{-1}(\prod_{i=1}^n D/N_i)$ where $\pi:D'\rightarrow \prod_{i=1}^n D'/M_i$ is the canonical map.
$(c)\Rightarrow  (b)$.
Set $C=\pi^{-1}(\prod_{i=1}^n K_i)$. Since $\prod_{i=1}^n K_i\subseteq \prod_{i=1}^n E/M_i$ is finite, it follows that $C\subseteq E$ is finite, so $C$ is Noetherian by Eakin-Nagata Theorem. Clearly $C'=E$ and $(C:C')=M_1\cdots M_n$. Set $N_i=M_i\cap C\in $ Max$(C)$. 
By \cite[Lemma 1.1.6]{FHP}, we have
$C_{N_i}=\rho^{-1}(K_i)$ where $\rho:E_{M_i}\rightarrow E/M_i$ is the canonical map, so $C_{N_i}$ is a PVD, cf. Theorem \ref{17T}. Since $V(D:D')=\{N_1,...,N_n\}$, we are done.
$(b)\Rightarrow  (a)$. If $M\in$ Max$(D)-V(D:D')$, $D_M$ is a DVR. 
Apply  Corollary \ref{13C}.
\end{proof}

\begin{corollary}\label{6C}
Let $d\in\mathbb{Z}-\{0,1\}$ be a square-free integer 
$d\equiv 1$ $($mod $4)$. 
Then $\mathbb{Z}[\sqrt{d}]$ is a TAF-domain if and only if $d\equiv 5$ $($mod $8)$.
\end{corollary}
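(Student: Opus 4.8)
The plan is to apply Corollary \ref{13C} (equivalently Corollary \ref{18C}), since $D:=\mathbb{Z}[\sqrt{d}]$ is a one-dimensional Noetherian domain: I must decide for which $d$ the localization $D_N$ is a PVD at every $N\in\mathrm{Max}(D)$, and trace how this depends on $d\bmod 8$.

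First I would fix the integral closure. Because $d\equiv 1\ (\mathrm{mod}\ 4)$, the element $\omega=(1+\sqrt{d})/2$ is integral over $\mathbb{Z}$ (it satisfies $t^2-t+\frac{1-d}{4}$, with integer constant term), and $D'=\mathbb{Z}[\omega]$ is the full ring of integers of $\mathbb{Q}(\sqrt{d})$, hence a Dedekind domain with $[D':D]=2$. A short $\mathbb{Z}$-module computation, using $\sqrt{d}=2\omega-1$ so that $D=\mathbb{Z}\oplus 2\mathbb{Z}\omega\subseteq \mathbb{Z}\oplus\mathbb{Z}\omega=D'$, identifies the conductor $(D:D')=2D'$, which is nonzero; thus Corollaries \ref{13C} and \ref{18C} apply. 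The same computation gives $D/(D:D')=D/2D'\cong\mathbb{F}_2$, so $2D'$ is already a maximal ideal $M$ of $D$ and is the \emph{unique} maximal ideal of $D$ containing the conductor. For any other $N\in\mathrm{Max}(D)$ we have $(D:D')\not\subseteq N$, whence $D_N=D'_N$ is a localization of the Dedekind domain $D'$, i.e. a DVR, and in particular a PVD. So everything is decided at $M$.

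Next I would analyze the primes of $D'$ over $2$ through the Kummer--Dedekind factorization of $t^2-t+\frac{1-d}{4}$ modulo $2$ (legitimate since $D'=\mathbb{Z}[\omega]$ is monogenic). If $d\equiv 5\ (\mathrm{mod}\ 8)$ this polynomial reduces to the irreducible $t^2+t+1$, so $2$ is inert: there is a unique prime $\mathfrak{p}=2D'$ over $M$ with residue field $\mathbb{F}_4$. Then $D'_M=D'_{\mathfrak p}$ is a DVR with maximal ideal $\mathfrak{p}D'_M=MD_M$ (here $M=2D'=\mathfrak{p}$), and $D_M$ is the pullback of the subfield $\mathbb{F}_2\subseteq\mathbb{F}_4$ along $D'_M\to\mathbb{F}_4$; by Theorem \ref{17T} this is a PVD, so $D$ is a TAF-domain by Corollary \ref{13C}. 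If instead $d\equiv 1\ (\mathrm{mod}\ 8)$, the polynomial reduces to $t(t+1)$ with distinct roots, so $2$ splits: $2D'=\mathfrak{p}_1\mathfrak{p}_2$ with $\mathfrak{p}_1\neq\mathfrak{p}_2$, both lying over $M$. Then $D'_M$ is semilocal with two maximal ideals, hence not a DVR, and $D_M$ is not a PVD, so $D$ is not a TAF-domain. Since a square-free $d\equiv 1\ (\mathrm{mod}\ 4)$ satisfies exactly one of $d\equiv 1$ or $d\equiv 5\ (\mathrm{mod}\ 8)$, this yields the stated equivalence.

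The conductor computation and the mod-$2$ factorization are routine; the step needing the most care is the reduction to the single prime $M$ over $2$ and the bookkeeping there -- verifying that $2D'$ is simultaneously the conductor and a maximal ideal of $D$, that $D_N$ is automatically a DVR for $N\neq M$, and that in the split case the two primes $\mathfrak{p}_1,\mathfrak{p}_2$ genuinely obstruct the PVD property (equivalently, $\mathrm{Spec}(D_M)\neq\mathrm{Spec}(D'_M)$, violating Theorem \ref{17T}(d)). This last point is the crux, since it is what converts the arithmetic of $2\bmod 8$ into the TAF/non-TAF dichotomy.
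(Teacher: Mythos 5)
Your proof is correct and follows essentially the same route as the paper: identify $D'=\mathbb{Z}[(1+\sqrt d)/2]$, observe that the conductor $2D'=(2,1+\sqrt d)$ is the unique maximal ideal $M$ of $D$ lying over it, and reduce the TAF property (via Corollaries \ref{13C}/\ref{18C} and Theorem \ref{5T}) to whether $2$ is inert in $D'$, i.e.\ whether $X^2-X+(1-d)/4$ is irreducible mod $2$, which holds exactly when $d\equiv 5 \pmod 8$. The paper phrases the final step as ``$D'/M$ is a field'' rather than through Kummer--Dedekind, but the computation is the same.
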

\begin{proof}
Our domain $D=\mathbb{Z}[\sqrt{d}]$ is Noetherian, $D'=\mathbb{Z}[({1+\sqrt{d}})/{2}]$ and $(D:D')=(2, 1+\sqrt{d}):=M$   is a maximal ideal of $D$. 
By Corollary \ref{18C} and Theorem \ref{5T}, $D$ is a TAF-domain if and only if $D'_M/MD_M=D'/M$ is a field.
Now $D'/M=\mathbb{Z}_2[X]/(X^2-X+(1-d)/4)$ is a field if and only if $(1+d)/4$ is an odd integer if and only if $d\equiv 5$ $($mod $8)$. 
\end{proof}

For example, in $D=\mathbb{Z}[\sqrt{-11}]$, the product $(4,3+\sqrt{-11})(5,3+\sqrt{-11})$ is a TA-factorization of  $(3+\sqrt{-11})$, because 
$D/(4,3+\sqrt{-11})=\mathbb{Z}_4$ and 
$D/(5,3+\sqrt{-11})=\mathbb{Z}_5$.

\begin{example}
 We use Corollary \ref{18C} to construct a TAF-subring of $E=\mathbb{Z}[\sqrt[4]{2}]$. It is well-known that $E$ is a Dedekind domain. Note that $3E$ is the product of the maximal ideals  $M_1=(3,y^2-y-1)$ and $M_2=(3,y^2+y-1)$, where $y=\sqrt[4]{2}$. Also note that $E/M_i=\mathbb{F}_9$ for $i=1,2$. Let $$\pi:E\rightarrow E/3E=
 \frac{\mathbb{Z}_3[T]}{(T^2-T-1)}\times \frac{\mathbb{Z}_3[U]}{(U^2+U-1)}= \mathbb{F}_9\times \mathbb{F}_9$$ be the canonical map. It can be seen that 
 $\pi(f)$ is the image in $\mathbb{F}_9\times \mathbb{F}_9$ of 
 $$ (a+c+d+(b+c+2d)T, a+c-d+(b-c+2d)U)$$
 where
 $f=a+by+cy^2+dy^3$ with $a,b,c,d\in \mathbb{Z}$.
 By Corollary \ref{18C}, $D:=\pi^{-1}(\mathbb{Z}_3\times \mathbb{Z}_3)$ is a TAF domain.  Then $f\in D$ if and only if 
 $b+c+2d$ and $b-c+2d$ are multiples of $3$ if and only if 
 $b-d$ and $c$ are multiples of $3$ if and only if 
 $f$ has the form $f=a+b(y+y^3)+e(3y^2)+k(3y^3)$ with $a,b,e,k\in \mathbb{Z}$. We get  
  $D=\mathbb{Z}[\sqrt[4]{2}+\sqrt[4]{8},3\sqrt{2},3\sqrt[4]{8}]$.
\end{example}
\
\
\

{\bf Acknowledgements.} 
The first two authors are highly grateful to ASSMS GC University Lahore, Pakistan in supporting and facilitating this research.
The third author gratefully acknowledges the warm
hospitality of Abdus Salam School of Mathematical Sciences GC University Lahore during his  visits in the period 2006-2016.


\begin{thebibliography}{9999}

\bibitem{AM} D.D. Anderson and J.L. Mott, Cohen-Kaplansky domains: Integral domains with a finite number of irreducible elements, J. Algebra {\bf 148} (1992), 17-41.

\bibitem{AB} D.F. Anderson and A. Badawi, On n-absorbing ideals of commutative rings, Comm. Algebra {\bf 39} (2011), 1646-1672.

\bibitem{AD}  D.F. Anderson and D.E. Dobbs,
Pairs of rings with the same prime ideals, 
Canad. J. Math. {\bf 32} (1980), 362-384.

\bibitem{B} A. Badawi, On 2-absorbing ideals of commutative rings, Bull. Austral. Math. Soc. {\bf  75} (2007), 417-429.



\bibitem{DR} T. Dumitrescu and S.U. Rahman, A class of pinched domains II, Comm. Algebra {\bf 39} (2011), 1394-1403.

\bibitem{FHL} M. Fontana, E. Houston and T. Lucas, \textit{Factoring ideals in integral domains}, Springer, New York, 2012.  

\bibitem{FHP} M. Fontana, J. Huckaba and I. Papick,
{\em Pr\"{u}fer Domains,}
 Dekker, New York, 1997.

\bibitem{G} R. Gilmer, \textit{Multiplicative ideal theory},  Dekker, New York, 1972.

\bibitem{HH} J. R. Hedstorm and E. G. Houston, Pseudo-valuation domains, Pacific J. Math.  {\bf 75} (1978), 137-147.

\bibitem{K} I. Kaplansky,\textit{ Commutative Rings}, The University of Chicago Press, 1974.

\bibitem{N} M. Nagata, {\em Local Rings}, New York, 1962.

\bibitem{PB} S. Payrovi and S. Babaei, On the 2-absorbing ideals in commutative  
rings, Bull. Malays. Math. Sci. Soc. (2) {\bf 36} (2013), 895-900.

\bibitem{P} K. Pettersson,
Corrigendum: the Picard group of Noetherian integral domains whose integral closures are principal ideal domains,
Comm. Algebra {\bf 24} (1996), 1547-1548.







\end{thebibliography}
\end{document}